\newtheorem{lemma}{Lemma}
\newtheorem{corollary}{Corollary}
\newtheorem{theorem}{Theorem}
\newtheorem{observation}{Observation}
\date{December 2016}
\title{Revisiting minimum profit conditions in uniform price day-ahead electricity auctions}
\author{Mehdi Madani\footnote{Louvain School of Management - Place des Doyens 1 bte L2.01.01, 1348 Louvain-la-Neuve, Belgium. Email: mehdi.madani@uclouvain.be}         \and
        Mathieu Van Vyve\footnote{CORE \& Louvain School of Managment, 1348 Louvain-la-Neuve, Belgium. This author is also member of ECORE, the association between CORE and ECARES.}
}
\begin{document}

% \begin{center}
% CORE DISCUSSION PAPER
% 
% 2015/??
% \end{center}

\vspace{-0.2cm}
{\let\newpage\relax\maketitle}

\begin{abstract}

We examine the problem of clearing day-ahead electricity market auctions where each bidder, whether a producer or consumer, can specify a minimum profit or maximum payment condition constraining the acceptance of a set of bid curves spanning multiple time periods in locations connected through a  transmission network with linear constraints. Such types of conditions are for example considered in the Spanish and Portuguese day-ahead markets. This helps describing the recovery of start-up costs of a power plant, or analogously for a large consumer, utility reduced by a constant term. A new market model is proposed with a corresponding MILP formulation for uniform locational price day-ahead auctions, handling bids with a minimum profit or maximum payment condition in a uniform and computationally-efficient way. An exact decomposition procedure with sparse strengthened Benders cuts derived from the  MILP formulation is also proposed. The MILP formulation and the decomposition procedure are similar to computationally-efficient approaches previously proposed to handle so-called block bids according to European market rules, though the clearing conditions could appear different at first sight. Both solving approaches are also valid to deal with both kinds of bids simultaneously, as block bids with a minimum acceptance ratio, generalizing fully indivisible block bids,  are but a special case of the MP bids introduced here. We argue in favour of the MP bids by comparing them to previous models for minimum profit conditions proposed in the academic literature, and to the model for minimum income conditions used by the Spanish power exchange OMIE.

%, now part of the pan-European PCR market

%\textbf{Keywords: }{Day-ahead electricity market auctions, \and Non-convexities, \and Mixed Integer Programming, \and Market Coupling, \and Equilibrium Prices, \and minimum profit conditions, \and maximum payments conditions, \and MP conditions }
%\PACS{PACS code1 \and PACS code2 \and more}

%\textbf{Mathematics Subject Classification (2000): }{90C11 \and 90-08 \and 90C06 }

\end{abstract}

%\textbf{Keywords:} Integer programming, OR in energy, Auctions/bidding, Large scale optimization

%{\let\newpage\relax\maketitle}
\maketitle

\newpage

\section{Introduction}
\label{intro}

\subsection{Minimum profit conditions and Near-Equilibrium in non-convex day-ahead electricity auctions}\label{subsec:non-convex-markets}

Day-ahead electricity markets are organized markets where electricity is traded for the  24 hours of the next day. They can take the form of single or two sided auctions (pool with mandatory participation to match forecast demand or auctions confronting elastic offer and demand). The prices set in day-ahead markets are used as reference prices for many electricity derivatives, and such markets are taking more  importance with the ongoing liberalization and coupling of  electricity markets around the world in general, and in Europe in particular.

Clearing these auctions amounts to finding - ideally- a partial equilibrium using submitted bids describing demand and offer profiles, depending on the utility, production costs and operational constraints of market participants. A market operator, typically power exchanges in Europe, is in charge of computing a market clearing solution. 

It is well-known that for a well-behaved convex welfare optimization problem where strong duality holds, duality theory provides with equilibrium prices. However, to describe their operational constraints or cost structure, participants can specify for example a minimum output level of production (indivisibilities), or that the revenue generated by the traded power at the market clearing prices should cover some start-up costs if the plant is started. Similar bids could be specified for the demand side. This leads to the study of partial market equilibrium with uniform prices where indivisibilities and fixed costs must be taken into account, deviating from a well-behaved convex configuration studied in classical microeconomic textbooks, e.g. in \cite{mascolell}. The need for bidding products introducing non-convexities is due in particular to the peculiar nature of electricity and the non-convexities of production sets of the power plants. 

When considering a market clearing problem with non-convexities such as indivisibilities (so-called block bids in the Pan-European PCR market \cite{euphemia}), or start-up cost recovery conditions (so-called complex bids with a minimum income condition also called MIC bids in PCR), most of the time no market equilibrium exists, see e.g. the toy example in Section \ref{sec:toyexample} for an instance involving MIC bids, and in \cite{madani2015mip} for an instance involving block bids.

Let us also mention that in coupled day-ahead electricity markets, representation of the network is a particularly important matter. Aside the potential issues due to the simplifications or approximations made to represent a whole network, it is of main importance for participants to understand clearly the reason for price differences occurring between different locations. Economically speaking, locational prices should ideally form a spatial equilibrium, as historically studied  in \cite{enke, samuelson1952}, which could equivalently be interpreted as requiring optimality conditions for TSOs, relating locational price differences to the scarcity and marginal prices of transmission resources.

Near-equilibrium under minimum profit conditions in uniform price day-ahead electricity auctions is the main topic of the present contribution, and is also considered in references \cite{euphemia, GarciaBertrand, GarciaBertrand2, gabriel2013, ruiz2012}, which are discussed in Section \ref{sec:literature} below.

\subsection{Contribution and structure of this article} \label{subsec:contrib}

The main contribution of the present paper is to show how to handle minimum profit (or maximum payment) conditions in a new way which turns out to generalize both block orders with a minimum acceptance ratio used in France, Germany or Belgium,  and, mutatis mutandis, complex orders with a minimum income condition used in Spain and Portugal. The new approach consists in new bids, which we call MP bids (for minimum profit or maximum payment), and the corresponding mathematical programming formulation is a MILP modelling all the corresponding market clearing conditions without \emph{any} auxiliary variables, similar to an efficient MIP formulation previously proposed for block orders \cite{Madani2014}. An efficient Benders decomposition with sparse strenghtened cuts similar to the one proposed in  \cite{Madani2014} is  also derived. These MP bids hence seem an appropriate tool to foster market design and bidding products convergence among the different regions which form the coupled European day-ahead electricity markets of the Pan-European PCR project.  

We start by providing in Section \ref{sec:toyexample}
  a toy example illustrating the key points dealt with in the reminder of the article. It illustrates the issues arising when considering minimum profit conditions, and  alternatives to take them into account in the computation of market clearing solutions. We describe in Section \ref{sec:unrestricted-welfare} the notation used and a basic 'unrestricted' welfare maximization problem where such minimum profit conditions are first not enforced, also recalling the nice equilibrium properties which would hold in a convex market clearing setting. 
  
Section \ref{sec-modelling-MP} is devoted to modelling minimum profit conditions or more generally MP conditions, as with the approach proposed, the statement of a maximum payment condition for demand-side orders is formally identical. After reviewing previous contributions considering minimum profit conditions,  we derive economic interpretations for optimal dual variables of a welfare maximization program where an \emph{arbitrary} MP bids combination has been specified. We then develop the core result, showing how to consider MP bids in a computationally-efficient way, relying on previous results to provide a MILP formulation without complementarity constraints nor any auxiliary variable to model these MP conditions. Section \ref{sec:rampingconstr} shows how to adapt all results when ramping constraints of power plants are considered.

Section \ref{sec:Benders} derives from the MILP formulation provided in Section \ref{sec-modelling-MP} a Benders decomposition procedure with locally strengthened Benders cuts. These cuts are valid in subtrees of a branch-and-bound solving a primal welfare maximization program, rooted at nodes where an incumbent should be rejected because no uniform prices exist such that MP conditions are all satisfied. They complement the classical Benders cuts which we show to correspond indeed to 'no-good cuts' basically rejecting the current MP bids combination, and which are globally valid.

Numerical experiments are presented in Section \ref{sec-numerical}. Implementations have been made in Julia/JuMP \cite{LubinDunningIJOC} and are provided together with sample datasets in an online Git repository \cite{revisiting_mp_conditions}. They show the efficiency and merit of the new approach, in particular compared to the current practice in OMIE-PCR.

\section{Near-equilibrium and minimum profit conditions}\label{subsec:unrestricted-welfare}
\label{sec:notation}

\subsection{Position of the problem: a toy example}\label{sec:toyexample}

In the following toy example, a bid curve (in blue) represents some elastic demand. To satisfy this demand, there are two offer bids from two plants, each having different start-up costs (100 EUR and 200 EUR respectively), but the same marginal cost of 10 EUR / MW. Both plants bid their marginal cost curve and their start-up cost to the auctioneer.

\begin{figure}
\begin{center}
\includegraphics[width=\textwidth]{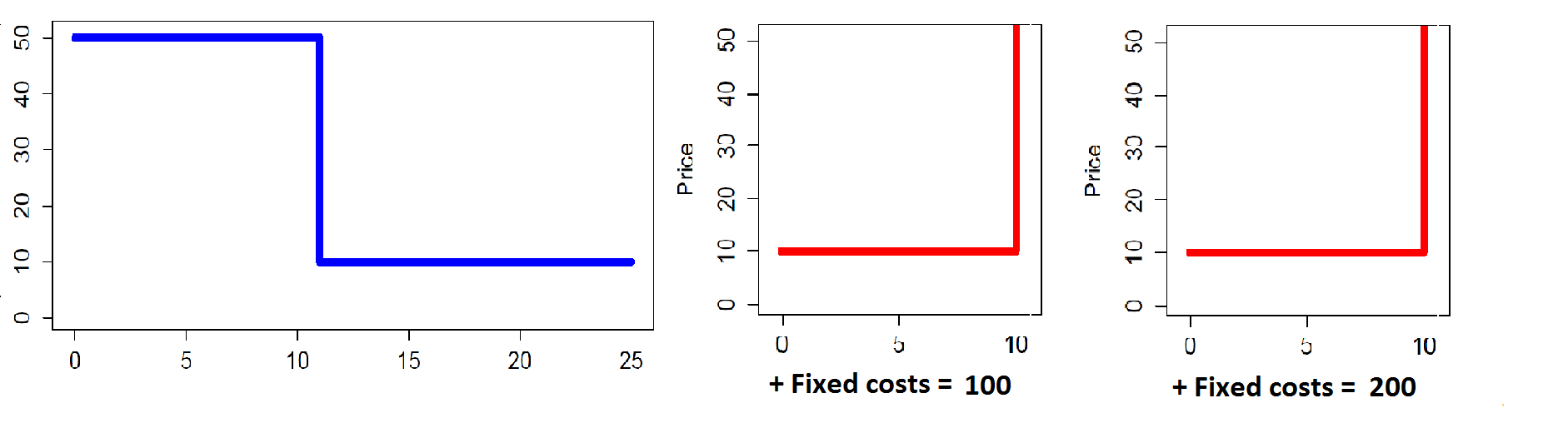}
\end{center}
\caption{Marginal cost/utility curves (see Table \ref{table1} for related start-up costs)}
\label{figure-1}
\end{figure}

Neglecting first the minimum income conditions stating that all costs should be recovered for online plants (i.e. both start-up and marginal costs), we can clear the market auction by matching the aggregated marginal costs (resp. utility) bid curves, as done in the left part of Figure \ref{figure-2}. In that case, the determined market clearing price would be 10 EUR /MW, and obviously, both power plants won't recover their costs for that market clearing price. 

However, if we allow the potentially paradoxical rejection of bids involving startup-costs, which is also tolerated in all previous propositions considering minimum profit conditions exposed in \cite{euphemia, gabriel2013, GarciaBertrand,GarciaBertrand2, ruiz2012}, then a 'satisfactory solution' could be obtained by either rejecting bid B or bid C. In that case, matching marginal cost/utility curves as in the right Figure \ref{figure-2}, we see that the market clearing price will rise to 50 EUR / MW and that, whatever the chosen offer B or C, the corresponding plant will recover all its  costs. Similar examples could be given for demand bids with a maximum payment condition.

These observations help understanding why it is not possible to get a market equilibrium such that all MP conditions are satisfied. It may be required to expel some bids from the market clearing solution that would be profitable for the market clearing prices obtained in that situation. On the other hand, including such 'paradoxically rejected bids' would modify prices such that the MP condition of some bid would not be satisfied any more.

\begin{figure}
\begin{center}
\includegraphics[scale=0.3]{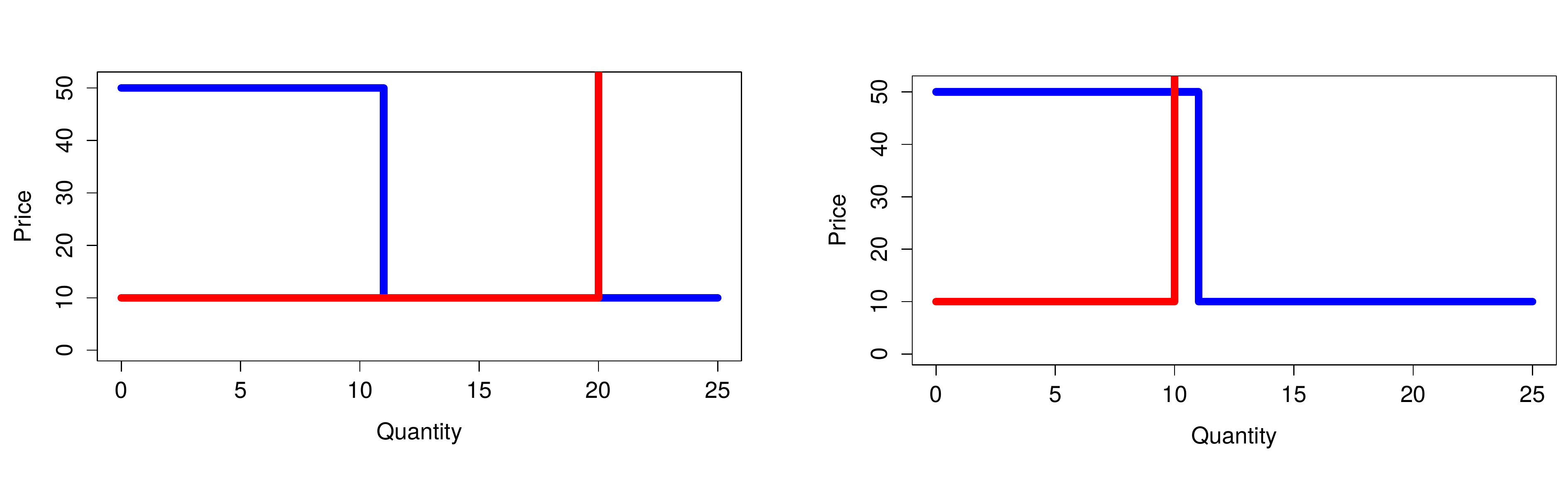}
\end{center}
\caption{Matching MP bids}
\label{figure-2}
\end{figure}

\begin{table}[h]
\begin{center}
\begin{tabular}{c|c|c|c}

Bids & Power (MW) & Limit price  (EUR/MW) & Start-up costs \\ 
\hline 
D1:  Demand bid 1 & 11 & 50 & - \\ 

D2:  Demand bid 2 & 14 & 10 & - \\ 

MP1:  Offer MP bid 1 & 10 & 10 & 100 \\ 

MP2:  Offer MP bid 2 & 10 & 10 & 200 \\ 

\end{tabular} 
\end{center}
\caption{Toy market clearing instance}
\label{table1}
\end{table}

\begin{table}[h]
\begin{center}
\begin{tabular}{c|c|c|c|c|c|c|c|}
   & Market Price & \multicolumn{2}{|c|}{Revenue} & \multicolumn{2}{|c|}{Costs}  & \multicolumn{2}{|c|}{Profits}\\ 
\hline 
   &              & MP1   & MP2       & MP1   & MP2    & MP1     & MP2  \\
\hline 
Matching MP1 \& MP2 & 10 & 100 & 100 & 200 & 300 & -100 & -200 \\ 
Matching MP1 & 50 & 500 & 0 & 200 & 0 & 300 & 0 \\ 
Matching MP2 & 50 & 0 & 500 & 0& 300 & 0 & 200 \\  
\end{tabular} 
\end{center}
\caption{Market outcomes}
\label{table2}
\end{table}

The second point is that, even if in both matchings the costs are recovered for the chosen plant, both matchings are not equivalent from a welfare point of view if we include fixed costs in the computation of the welfare.

Under current OMIE-PCR market rules, both matching possibilities are not distinguished because fixed costs are not included in the welfare maximizing objective function which only considers marginal costs (resp. utility) of selected plants (resp. consumers). In such a case, welfare is considered to be 400 whatever the chosen matching. Let us note that in the same way, in \cite{GarciaBertrand}, the fixed costs that should be recovered are not included in the welfare objective.

If we pay attention to fixed costs when computing welfare, matching MP1 yields a welfare of 300 while matching MP2 yields a welfare of 200. Such a choice in terms of inclusion of fixed costs in the welfare objective function is similar to what is done in \cite{ruiz2012}.

\subsection{Unrestriced welfare optimization}\label{sec:unrestricted-welfare}

Notation used throughout the text is provided here for quick reference. The interpretation of any other symbol is given within the text itself.

\subsection*{Notation and Abbreviations}

\medskip

\noindent
Abbreviations:
\begin{tabular}{ll}
MP bids & Stands for bids with either a minimum profit or a maximum payment condition \\ 
MIC bids & Stands for complex orders with a minimum income condition used in OMIE-PCR\\
ITM & Stands for 'in-the-money' \\ 
ATM & Stands for 'at-the-money' \\ 
OTM & Stands for 'out-of-the-money' 
\end{tabular} 

\medskip
\noindent
Sets and indices:

\begin{tabular}{ll}
$i$ & Index for hourly bids, in set $I$ \\ 
$c$ & Index for MP bids, in set $C$ \\
$hc$ & Index for hourly bids associated to the MIC bid $c$, in set $H_c$ \\
$l$ & Index for locations, $l(i)$ (resp. $l(hc)$) denotes the location of bid $i$ (resp. $hc$)\\
$t$ & Index for time slots, $t(i)$ (resp. $t(hc)$) denotes the time slot of bid $i$, (resp. $hc$) \\
$I_{lt} \subseteq I$ & Subset of hourly bids associated to location $l$ and time slot $t$ \\
$HC_{lt} \subseteq HC$  & Subset of MP hourly suborders, associated to location $l$ and time slot $t$ 
\end{tabular} 

\bigskip

Parameters:

\begin{tabular}{ll}
$Q_i, Q_{hc}$ & Power amount of hourly bid $i$ (resp. $hc$), \\ &  $Q<0$ for sell bids, and $Q>0 $ for demand bids \\
$r_{hc} \in [0,1]$ & minimum ratio parameter used to express minimum output levels \\
$P^i, P^{hc}$ & Limit bid price of hourly bid $i$, $hc$\\
$a_{m,k}$ & Abstract linear network representation parameters \\
$w_m$ & Capacity of the network resource $m$ \\
$F_c$ & Start-up or fixed cost associated to bid $c$
\end{tabular} 

\bigskip

Primal decision variables:

\begin{tabular}{ll}
$x_i \in [0,1]$ & fraction of power $ Q_i $ which is executed \\
$x_{hc} \in [0,1]$ & fraction of power $Q_{hc}$ (related to the MIC bid $c$) which is executed\\
$u_c \in \{0,1\}$  & binary variable conditioning the execution or rejection of the MP bid $c$ \\ & (i.e. of the values of $x_{hc}$) \\
$n_k$ & variables used for the abstract linear network representation, related to net export positions \\
\end{tabular} 

\bigskip

Dual decision variables:

\begin{tabular}{ll}
$\pi_{lt}$ & locational uniform price of electricity at location $l$ and time slot $t$ \\
$v_m \geq 0$ & dual variable pricing the network constraint $m$, \\
$s_{i} \geq 0$ & dual variable interpretable as the surplus associated to the execution of bid $i \in I$\\
$s_{hc}^{max} \geq 0$ & dual variable related to the (potential) surplus associated to the execution of bid $hc$\\
$s_{hc}^{min} \geq 0$ & dual variable related to the (potential) surplus associated to the execution of bid $hc$\\
$s_{c} \geq 0$ & dual variable interpretable as the surplus associated to the execution of the MP bid $c$\\

\end{tabular} 

\bigskip

A classical hourly order corresponds to a step of a stepwise offer or demand bid curve relating accepted power quantities to prices. For each such step, the variable $x_i \in [0,1]$ denotes which fraction of this step will be accepted in the market clearing solution. In the same way, variables $x_{hc}$ denote these accepted fractions for bid curves associated to a bid with a minimum profit condition or maximum payment condition (MP bids).

Concerning these MP bids, binary variables $u_c$ are introduced to model the conditional acceptance of a set of hourly bids $hc \in H_c$, controlled via constraints (\ref{primal-eq3}), while constraints  (\ref{primal-eq3b}) enforce minimum acceptance ratios where applicable. They are used for example to model minimum power outputs of power plants. The conditional acceptances will be expressed as price-based decisions (as called in  \cite{zak} \cite{Fernandez-Blanco2015}) using the primal-dual formulation developed in Section \ref{sec-enforced}, involving both quantity and price variables. Parameters $F_c$ correspond to fixed/start-up costs incurred if the MP bid is accepted. Let us also note that a block bid spanning multiple time periods as described in \cite{euphemia,martin,Madani2014} could be described as an MP bid $c$ by using a suitable choice of associated bid curves and minimum acceptance ratios, and setting the corresponding fixed cost parameter $F_c$ to 0 in (\ref{primal-obj}). It turns out that in such a case, minimum profit or maximum payment conditions as described below will exactly correspond to the European market clearing conditions for block orders described in \cite{euphemia,martin,Madani2014}, essentially stating that no loss should be incurred to any accepted block bid, but allowing some block bids to be paradoxically rejected.

Constraint \eqref{primal-eq5} is the balance equation at location $l$ at time $t$, where the right-hand side is the net export position expressed as a linear combination of abstract network elements. Constraint \eqref{primal-eq6} is the capacity constraint of the abstract network resource $m$. This abstract linear network representation covers e.g. DC network flow models or the so-called ATC and Flow-based models used in PCR (see \cite{euphemia}). The usual network equilibrium conditions involving locational market prices apply, as they will be enforced by dual and complementarity conditions (\ref{duala-eq7}), (\ref{cca-eq5}), see \cite{Madani2014}.

The objective function aims at maximizing welfare. For the sake of conciseness, we do not consider ramping constraints of power plants in the main parts of the text, though they can straightforwardly be included in all the developments carried out, as shown in Section \ref{sec:rampingconstr}.

UWELFARE:

\begin{multline}\label{primal-obj}
\max_{x,y,u,n} \sum_{i} (P^{i}Q^{}_{i})x_i + \sum_{c, h\in H_c} (P^{hc}Q^{}_{hc})x_{hc}  - \sum_c F_c u_c
\end{multline}

subject to:

\begin{align}
&x_i \leq 1 & \forall i \in I \ & [s_i]\label{primal-eq1} \\
&x_{hc} \leq u_{c} & \forall h \in H_c, c\in C \ & [s_{hc}^{max}] \label{primal-eq3} \\
&x_{hc} \geq r_{hc }u_{c} & \forall h \in H_c, c\in C \ & [s_{hc}^{min}] \label{primal-eq3b} \\
&u_{c} \leq 1 &\forall c \in C &[s_{c}] \label{primal-eq4} \\
&\sum_{i \in I_{lt}}Q^{}_{i}x_i + \sum_{hc\in HC_{lt}} Q^{}_{hc}x_{hc} \nonumber\\ & \hspace{5cm}   = \sum_{k} e^k_{l,t} n_k, \ &  \forall (l,t) \ \   & [\pi_{l,t}]\label{primal-eq5} \\
&\sum_{k} a_{m,k} n_k  \leq w_{m}\ & \forall m \in N \ \ & [v_{m}]\label{primal-eq6}  \\                  
&x_i, u_c\geq 0, (x_{hc}\ free) \label{primal-eq7} \\
&u \in \mathbb{Z} \label{primal-eq8}
\end{align}

\subsection{Dual and complementarity conditions of the continuous relaxation}

We denote by UWELFARE-CR-DUAL the dual of the continuous relaxation of the welfare maximization program stated above.

UWELFARE-CR-DUAL:

\begin{multline}\label{duala-obj}
\min \sum_{i} s_i  +\sum_c s_c + \sum_m w_m v_m
\end{multline}

\text{subject to: }
\begin{align}
&s_i + Q^{}_i \pi_{l(i),t(i)} \geq Q^{}_i P^{i} , &  \forall i  \qquad [x_i] \label{duala-eq1} \\
&(s_{hc}^{max} - s_{hc}^{min})  + Q^{}_{hc}\pi_{l(hc),t(hc)} = Q^{}_{hc}P^{hc}, & \forall h \in H_c, c \ [x_{hc}] \label{duala-eq2} \\
&s_{c}  \geq \sum_{h \in H_{c}} (s_{hc}^{max} - r_{hc}s_{hc}^{min} ) - F_{c}, &  \forall c \in C  \ [u_{c}] \label{duala-eq6}  \\
&\sum_m a_{m,k} v_m- \sum_{l,t}e^k_{l,t} \pi_{l,t} = 0 & \forall k \in K \ [n_k] \label{duala-eq7}\\
&s_i, s_c, s_{hc}, v_m \geq 0 \label{duala-eq8}
\end{align}

Complementarity conditions: 

\begin{align}
&s_i(1-x_i)=0 &\ \forall i \in I \label{cca-eq1}\\
&s_{hc}^{max}(u_{c}-x_{hc})=0 &\ \forall h,c \label{cca-eq3}\\
&s_{hc}^{min}(x_{hc} - r_{hc}u_{c})=0 &\ \forall h,c \label{cca-eq3b}\\
&s_{c}(1-u_{c})=0 &\forall c \in C \label{cca-eq4} \\
&v_m(\sum_k a_{m,k} n_k - w_m)=0 &\ \forall m \in N \label{cca-eq5}\\
&x_i(s_i  + Q^{}_{i}\pi_{l(i),t(i)} - Q^{}_{i}P^{i})=0 &\ \forall i \in I \label{cca-eq10}\\
&u_{c}(s_{c} - \sum_{h \in H_{c}} (s_{hc}^{max} - r_{hc}s_{hc}^{min} ) + F_{c} ) =0 & \forall c \in C \label{cca-eq15}
\end{align}

%&x_{hc}(s_{hc}^{max} - s_{hc}^{min} + Q^{}_{hc}\pi_{l(hc),t(hc)} - Q^{}_{hc}P^{hc})=0 &\ \forall h,c \label{cca-eq11}\\

As it is well-known, these dual and complementarity conditions, which are optimality conditions for the continuous relaxation of (\ref{primal-obj})-(\ref{primal-eq8}) denoted UWELFARE-CR, exactly describe the nice equilibrium properties we would like to have for a market clearing solution. This could be easily seen from the economic interpretations given in Lemmas \ref{lemma-si},\ref{lemma-surplus-shc}, \ref{lemma-duadur} and Theorem \ref{theorem-mpconditions} below.

Hence, equilibrium and integrality conditions for $u$ cannot be both satisfied unless the continuous relaxation UWELFARE-CR admits a solution which is integral in $u$. In the particular case where there is no fixed cost ($\forall c\in C, F_c=0$), no minimum acceptance ratios ($r_{hc}=0$ for all $hc\in H_c, c \in C$), and there is no condition restraining the conditional acceptances modelled by the binary variables $u_c$ via constraints (\ref{primal-eq3}), it is always optimal to set all $u_c:=1$ and the problem amounts to solving a classical convex market clearing problem where equilibrium can be found which optimizes welfare.

Also, even setting $F_c:=0$ in (\ref{primal-obj}), adding MP conditions to the constraints (\ref{primal-eq1})-(\ref{primal-eq8}), (\ref{duala-eq1})-(\ref{cca-eq15}) to deal with them as in OMIE-PCR (cf. the toy example above with the remark about distinguishable cases, and also Section \ref {sec:comparison-pcr}) would in most cases render the problem infeasible. Hence, equilibrium restrictions must be relaxed, and this can be done in different ways, which is the topic of the next section.

\section{Modelling Near-equilibrium with MP Conditions}\label{sec-modelling-MP}

Section \ref{sec:literature}  reviews previous propositions to handle minimum profit conditions, including the current practice in OMIE-PCR, while Section \ref{sec-enforced} proposes a new approach which seems to be both more appropriate economically speaking, and computationally more efficient. Section \ref{sec:comparison-pcr} makes further technical comparisons between the current OMIE-PCR practice and the new proposition, and recalls an exact linearisation for minimum income conditions used by OMIE proposed in a previous contribution. Ramping conditions are not explicitely considered here, but Section \ref{sec:rampingconstr} shows how all results could be derived when these are included as well in the models.

\subsection{Modelling minimum profit conditions: literature review} \label{sec:literature}

As stated above, when one considers MP conditions or indivisibilities, it is needed to relax market equilibrium conditions to get feasible solutions. A first idea to relax these equilibrium conditions is to relax the complementarity conditions (\ref{cca-eq1})-(\ref{cca-eq15}) while making them satisfied as closely as possible. With the present context and notation, the proposition in \cite{GarciaBertrand} is essentially to minimize the slacks, i.e. the deviations from 0, of the left-hand sides in (\ref{cca-eq1})-(\ref{cca-eq15}), while adding ad-hoc non-convex quadratic constraints guaranteeing non-negative profits for producers, which are then approximated with linear constraints. The idea is generalized in  \cite{gabriel2013} which also considers the possibility of relaxing integrality conditions and to minimize a weighted sum of deviations from complementarity, of deviations from integrality (which could be required to be null), and of uplift variables included in the statement of the minimum profit conditions, corresponding to side payments to ensure revenue adequacy for producers. Leaving aside relaxation of integrality conditions and uplifts, to minimize deviations from complementarity, for each left-hand side expression $g_l \geq 0$,  slack variables $\epsilon_l$ are added together with constraints $\epsilon_l \geq g_l$, and the sum of the $\epsilon_l$ is minimized. Let us note that in the models considered, the fixed costs involved in the minimum profit conditions are not part of the welfare maximizing function in \cite{GarciaBertrand}, while they are included in the welfare in \cite{gabriel2013}.

The model and idea suggested in \cite{gabriel2013} is  considered further in \cite{ruiz2012}, where there is no uplift variable in the statement of minimum profit conditions, therefore requiring revenue adequacy from the uniform market prices only, and where it is observed that minimizing the slacks amounts to minimizing the duality gap given with our notation by (\ref{duala-obj}) minus (\ref{primal-obj}), subject to primal and dual constraints (\ref{primal-eq1})-(\ref{duala-eq8}). The contribution \cite{ruiz2012} observes that this is a significant improvement over the formulation proposed in \cite{gabriel2013}.

In all these propositions, the choice is made to use uniform prices, to ensure minimum profit conditions for producers, and to minimize the deviations from a market equilibrium by minimizing the sum of slacks of all complementarity conditions. In such a case, there is no control on which deviations from market equilibrium are allowed, and in particular, network equilibrium conditions which correspond to optimality conditions of TSOs are often not satisfied.

In the Pan-European PCR market, the choice has been made to ensure network equilibrium conditions as well as equilibrium conditions for all 'classical convex bids' corresponding to steps of classical bid curves. The only allowed deviations from a market equilibrium are that some 'non-convex bids' involving minimum power output constraints or minimum profit (resp. maximum payment) conditions could be paradoxically rejected as in the toy example given above in Section \ref{sec:toyexample}. Let us note that such a 'paradoxical rejection' is also allowed in all other propositions.

Concerning complex bids with a minimum income condition used in OMIE-PCR \cite{euphemia,europex}, minimum profit conditions are of the form:

\begin{equation} \label{mic-eq0}
(u_c=1) \Longrightarrow \sum_{h \in H_c} (-Q^{}_{hc}x_{hc}) \pi_{l(hc),t(hc)} \geq \widetilde{F_c} + \sum_{h \in H_c} (-Q^{}_{hc}x_{hc}) V_c,
\end{equation}

where for the given market prices $\pi_{l,t}$, classical bid curves and the network are 'at equilibrium', describing in particular the fact that ITM hourly bids are fully executed, OTM hourly bids are fully rejected, and ATM hourly bids could be executed or rejected. In the condition, $\widetilde{F_c}$ corresponds to a start-up cost, and $V_c$ to a variable cost of production, while $\sum_{h \in H_c} (-Q^{}_{hc}x_{hc})\pi_{l(hc),t(hc)}$ denotes the revenue generated at the given market prices.

We have shown in a previous article \cite{madani2015mip}, in which other related economic aspects are considered, how to give an exact linearization of this kind of constraints in the whole European market model which can then be formulated as a MILP without \emph{any} auxiliary variables, relying on strong duality for linear programs to enforce equilibrium for the network, classical hourly bids, and hourly bids related to accepted MIC bids. This is reviewed (and extended to include minimum power output level conditions) below in Section \ref{sec:comparison-pcr}. Let us also note here that an exact linearisation similar to the one proposed in \cite{madani2015mip} has been independently proposed  in \cite{Fernandez-Blanco2015}. Though the derivation therein is technically different and e.g. needs to introduce many auxiliary continuous variables and constraints for a McCormick convexification of bilinear binary-continuous terms, a parallel could be made between ideas of the two approaches, which is beyond the scope of the present contribution.

The following Table comparatively summarizes some core characteristics of the previous propositions to model minimum profit conditions and the present one presented below:

\begin{table}[ht]
\begin{center}
\begin{tabular}{c|c|c|c}
Proposition & Start-up costs & Variable costs  & \emph{Strict} spatial  \\ 
  & in the Welfare & in the Min. Profit. Cond. & price equilibrium  \\ 
\hline 
Garcia-Bertrand et al. \cite{GarciaBertrand} & No & marginal costs & No  \\ 
Garcia-Bertrand et al. \cite{GarciaBertrand2} & No & marginal costs & No  \\ 
Gabriel et al. \cite{gabriel2013} & Yes & marginal costs & No  \\ 
Ruiz et al. \cite{ruiz2012} & Yes & marginal costs & No  \\ 
OMIE-PCR  \cite{euphemia} & No & Ad-hoc var. costs & Yes \\ 
Present contribution & Yes & marginal costs & Yes \\ 
\end{tabular} 
\end{center}
\caption{Comparison of  propositions}
\label{table3}
\end{table}

\subsection{A new proposition for modelling MP conditions}\label{sec-enforced}

We use a slightly modified version of a MIP framework introduced in \cite{madani2015mip}, to enforce equilibrium for the convex bids and the network, and which is computationally efficient in particular because it avoids explicitly adding complementarity conditions modelling equilibrium for this convex part, and also any auxiliary variables. It is used to present two distinct models for minimum profit conditions in this setting: one used in practice for many years by OMIE  now coupled to PCR, and the new one involving the 'MP bids' introduced in the present contribution.

\subsubsection{Duality, uniform prices and deviations from equilibrium}\label{subsec:duality-uniformprices}\label{subsec-duality-econ}

Let us consider the primal welfare maximization problem UWELFARE stated in Section \ref{sec:unrestricted-welfare}. Let us now consider a partition $C=C_r \cup C_a$, and the following constraints, fixing all integer variables to some arbitrarily given values (unit-commitment-like decisions):

\begin{align}
&- u_{c_a} \leq -1 \qquad & \forall c_a \in C_a\subseteq C & \hspace{0.4cm} [du^a_{c_a}] \label{fix-eq1} \\
&u_{c_r} \leq 0 \qquad & \forall c_r \in C_r \subseteq C & \hspace{0.4cm}  [du^r_{c_r}] \label{fix-eq2}
\end{align}

Dropping integer constraints (\ref{primal-eq8}) not needed any more, this yields an LP whose dual is:

\begin{multline}\label{dual-obj}
\min \sum_{i} s_i + \sum_c s_c + \sum_m w_m v_m   - \sum_{c_a \in C_a} du^a_{c_a}  
\end{multline}

\text{subject to: }
\begin{align}
&s_i + Q^{}_i \pi_{l(i),t(i)} \geq Q^{}_i P^{i} , &  \forall i  \qquad [x_i] \label{dual-eq1} \\
&(s_{hc}^{max} - s_{hc}^{min}) + Q^{}_{hc}\pi_{l(hc),t(hc)} = Q^{}_{hc}P^{hc}, & \forall h \in H_c, c \ [x_{hc}] \label{dual-eq2} \\
&s_{c_r} + du^r_{c_r} \geq \sum_{h \in H_{c}} (s_{hc}^{max} - r_{hc}s_{hc}^{min} ) - F_{c}, &  \forall c_r \in C_r  \ [u_{c_r}] \label{dual-eq5} \\
&s_{c_a} - du^a_{c_a} \geq \sum_{h \in H_{c}} (s_{hc}^{max} - r_{hc}s_{hc}^{min} ) - F_{c}, &  \forall c_a \in C_a  \ [u_{c_a}] \label{dual-eq6}  \\
&\sum_m a_{m,k} v_m- \sum_{l,t}e^k_{l,t} \pi_{l,t} = 0 & \forall k \in K \ [n_k] \label{dual-eq7}\\
&s_i, s_c, s_{hc},du^r_{c_r},du^a_{c_a}, v_m \geq 0 \label{dual-eq8}
\end{align}

We now write down the complementarity constraints corresponding to these primal and dual programs parametrized by the integer decisions. Economic interpretations are stated afterwards:

\begin{align}
&s_i(1-x_i)=0 &\ \forall i \in I \label{cc-eq1}\\
&s_{hc}^{max}(u_{c}-x_{hc})=0 &\ \forall c,h\in H_c \label{cc-eq3}\\
&s_{hc}^{min}(x_{hc} - r_{hc}u_{c})=0 &\ \forall c, h \in H_c \label{cc-eq3b}\\
&s_{c}(1-u_{c})=0 &\forall c \in C \label{cc-eq4} \\
&v_m(\sum_k a_{m,k} n_k - w_m)=0 &\ \forall m \in N \label{cc-eq5}\\
&(1-u_{c_a})du^a_{c_a}=0 & \forall c_1 \in C_1 \label{cc-eq7} \\
&u_{c_r}du^r_{c_r}=0 & \forall c_r \in C_r  \label{cc-eq6} \\
&x_i(s_i  + Q^{}_{i}\pi_{l(i),t(i)} - Q^{}_{i}P^{i})=0 &\ \forall i \in I \label{cc-eq10}\\
&u_{c_r}(s_{c_r} + du^r_{c_r} - \sum_{h \in H_{c_r}} (s_{hc_r}^{max} - r_{hc_r}s_{hc_r}^{min} ) + F_{c_r}) =0 & \forall c_r \in C_r \label{cc-eq14} \\
&u_{c_a}(s_{c_a} - du^a_{c_a} - \sum_{h \in H_{c_a}} (s_{hc_a}^{max} - r_{hc_a}s_{hc_a}^{min} ) + F_{c_a} ) =0 & \forall c_a \in C_a \label{cc-eq15}
\end{align}

%&x_{hc}(s_{hc}^{max} - s_{hc}^{min}  + Q^{}_{hc}\pi_{l(hc),t(hc)} - Q^{}_{hc}P^{hc})=0 &\ \forall h,c \label{cc-eq11}\\

In what follows, we consider uniform prices, that is all payments depend only and proportionally on a single price $\pi_{l,t}$ for each location $l$ and time period $t$.

\bigskip

In the following Lemmas, it is important to keep in mind the sign convention adopted, according to which a bid quantity $Q>0$ for a buy bid, and $Q<0$ for a sell bid, cf. the description of notation above.

\bigskip

\begin{lemma}[Interpretation of $s_i$ and equilibrium for hourly bids]\label{lemma-si}
Let us consider a solution to (\ref{primal-eq1})-(\ref{primal-eq8}), (\ref{fix-eq1})-(\ref{fix-eq2}), (\ref{dual-eq1})-(\ref{cc-eq15}). Variables $s_i$  correspond to surplus variables, i.e.:

\begin{equation}\label{lemma-surplus-si-eq}
s_i = (Q^{}_{i}P^{i} - Q^{}_{i}\pi_{l(i),t(i)})x_i
\end{equation}

Moreover, the following equilibrium conditions hold, meaning that for the given market prices $\pi_{l,t}$, no other level of execution $x_i^*$ could be preferred to $x_i$:
\begin{itemize}
\item An hourly bid $i$ which is fully executed, i.e. for which $x_i=1$, is ITM or ATM, and the surplus is given by $s_i= (Q^{}_{i}P^{i} - Q^{}_{i}\pi_{l(i),t(i)})x_i = Q^{}_{i}P^{i} - Q^{}_{i}\pi_{l(i),t(i)} \geq 0 $, %, which is strictly positive provided that the bid is ITM and $Q_i\neq0$,

 \item An hourly bid $i$ which is fractionally executed is ATM, i.e. $(Q^{}_{i}P^{i} - Q^{}_{i}\pi_{l(i),t(i)})=0 = s_i$

 \item Fully rejected bids $i$, i.e. for which $x_i=0$, are OTM or ATM, and then $s_i=0$, which also corresponds to the surplus: $s_i=0=(Q^{}_{i}P^{i} - Q^{}_{i}\pi_{l(i),t(i)})x_i = (Q^{}_{i}P^{i} - Q^{}_{i}\pi_{l(i),t(i)})^{+}$,
 
Hence, ITM hourly bids are fully accepted, OTM hourly bids are fully rejected, and ATM hourly bids $i$ can be either accepted or rejected, fully or fractionally.

\end{itemize}

%Conversely, for prices $\pi_{l,t}$ such that in-the-money bids are fully executed and out-of-the-money bids are fully rejected, one can define appropriately $s_i$ as above in order to satisfy constraints indexed by $i$ in (\ref{primal-eq1})-(\ref{primal-eq8}), (\ref{fix-eq1})-(\ref{fix-eq2}), (\ref{dual-eq1})-(\ref{cc-eq15}).
\end{lemma}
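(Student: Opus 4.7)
My plan is to prove the lemma by a purely local case analysis on the value of $x_i\in[0,1]$, using only three pieces of the optimality system: the two complementarity conditions \eqref{cc-eq1} and \eqref{cc-eq10}, together with the dual feasibility constraint \eqref{dual-eq1} and the sign constraint $s_i\geq 0$ from \eqref{dual-eq8}. The rest of the model (network, MP-bid variables, integer fixings) is irrelevant here because the claim concerns a single hourly bid.

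First I would establish the surplus identity \eqref{lemma-surplus-si-eq}. If $x_i=0$, then \eqref{cc-eq1} forces $s_i=0$ and the right-hand side of \eqref{lemma-surplus-si-eq} also vanishes. If $x_i=1$, then \eqref{cc-eq10} rearranges directly to \eqref{lemma-surplus-si-eq}. In the remaining case $0<x_i<1$, \eqref{cc-eq1} again forces $s_i=0$, while dividing \eqref{cc-eq10} by $x_i$ yields $Q_iP^i=Q_i\pi_{l(i),t(i)}$, so both sides of \eqref{lemma-surplus-si-eq} are zero.

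Second I would derive the ITM/ATM/OTM classification from the same case split augmented with dual feasibility. Under the sign convention of the paper, ITM, ATM, OTM are characterized respectively by $Q_i(P^i-\pi_{l(i),t(i)})>0$, $=0$, $<0$, uniformly for buy and sell bids. For $x_i=1$, the surplus identity combined with $s_i\geq 0$ gives $Q_i(P^i-\pi_{l(i),t(i)})\geq 0$, so the bid is ITM or ATM. For $0<x_i<1$ the previous step already yielded $Q_i(P^i-\pi_{l(i),t(i)})=0$, i.e.\ ATM. For $x_i=0$, combining $s_i=0$ with \eqref{dual-eq1} gives $Q_i(P^i-\pi_{l(i),t(i)})\leq 0$, so the bid is OTM or ATM. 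The three bulleted statements of the lemma follow immediately, and the converse summary (ITM fully accepted, OTM fully rejected, ATM free) follows from these three by contraposition.

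The only mild pitfall I anticipate is the sign-convention bookkeeping: one must remember that a sell bid has $Q_i<0$, which flips the ordering between the limit price and the market price, so that the verbal labels ITM/OTM map consistently to $Q_i(P^i-\pi_{l(i),t(i)})\gtrless 0$ on both sides of the market. Once this is kept in mind, the proof is a mechanical application of the stationarity/complementarity pair \eqref{dual-eq1}--\eqref{cc-eq10} and the upper-bound/complementarity pair \eqref{primal-eq1}--\eqref{cc-eq1}; there is no genuinely hard step.
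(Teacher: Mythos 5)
Your proposal is correct and follows essentially the same route as the paper's own proof: a case split on $x_i\in\{0\}\cup(0,1)\cup\{1\}$, deriving the surplus identity and the ITM/ATM/OTM classification from the complementarity pair (\ref{cc-eq1}), (\ref{cc-eq10}) together with dual feasibility (\ref{dual-eq1}) and $s_i\geq 0$. The only cosmetic difference is that you divide (\ref{cc-eq10}) by $x_i$ in the fractional case where the paper first uses (\ref{cc-eq1}) to get $s_i=0$ and then reads the conclusion off (\ref{cc-eq10}); the substance is identical.
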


\begin{proof}
If $x_i=1$, conditions (\ref{cc-eq10}) ensure that $s_i  =  Q^{}_{i}P^{i} - Q^{}_{i}\pi_{l(i),t(i)} \geq 0$ (since $s_i \geq0$), and the bid is ITM or ATM. Multiplying the obtained equality by $x_i=1$, we get identity (\ref{lemma-surplus-si-eq}).

If $0 < x_i < 1$, $s_i= 0 = s_ix_i$ according to (\ref{cc-eq1}), and (\ref{cc-eq10}) then gives $s_i = Q^{}_{i}P^{i} - Q^{}_{i}\pi_{l(i),t(i)} = 0$: the bid is ATM. Multiplying these equalities by $x_i$, we get identity (\ref{lemma-surplus-si-eq}).

If $x_i=0$, $s_i=0$ according to (\ref{cc-eq1}), which used in dual conditions (\ref{dual-eq1}) gives $Q^{}_{i}P^{i} - Q^{}_{i}\pi_{l(i),t(i)} \leq 0$: the bid is OTM or ATM. As $s_i=x_i=0$, identity (\ref{lemma-surplus-si-eq}) is trivially satisfied.
\end{proof}

\bigskip

\begin{lemma}[Interpretation of $s_{hc}^{max}, s_{hc}^{min} $]\label{lemma-surplus-shc}

Provided that $u_c=1$:

\begin{equation}
(s_{hc}^{max} - r_{hc}s_{hc}^{min} ) =  (Q^{}_{hc}P^{hc} - Q^{}_{hc}\pi_{l(hc),t(hc)})x_{hc} \label{lemma-surplus-shc-eq}
\end{equation}

while if $u_c=0$, then the left-hand side is disconnected from the right-hand side which is 0. Economically speaking, this means that for rejected MP bids, the left-hand side only corresponds to a potential surplus.
\end{lemma}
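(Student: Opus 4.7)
The plan is to mirror the proof of Lemma \ref{lemma-si}, splitting on the value of $u_c$ and combining the dual equality (\ref{dual-eq2}) with the two complementarity conditions (\ref{cc-eq3}) and (\ref{cc-eq3b}) that couple $s_{hc}^{max}$ and $s_{hc}^{min}$ to the primal variable $x_{hc}$.

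In the case $u_c = 1$, I would first rearrange the dual equality (\ref{dual-eq2}) as $s_{hc}^{max} - s_{hc}^{min} = Q_{hc}P^{hc} - Q_{hc}\pi_{l(hc),t(hc)}$, so that the right-hand side of (\ref{lemma-surplus-shc-eq}) becomes $(s_{hc}^{max} - s_{hc}^{min})x_{hc}$. With $u_c=1$, condition (\ref{cc-eq3}) reduces to $s_{hc}^{max}(1 - x_{hc}) = 0$, yielding $s_{hc}^{max} x_{hc} = s_{hc}^{max}$; and (\ref{cc-eq3b}) reduces to $s_{hc}^{min}(x_{hc} - r_{hc}) = 0$, yielding $s_{hc}^{min} x_{hc} = r_{hc} s_{hc}^{min}$. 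Substituting these two identities into $(s_{hc}^{max} - s_{hc}^{min})x_{hc}$ gives exactly $s_{hc}^{max} - r_{hc} s_{hc}^{min}$, which is the claimed identity.

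For the case $u_c = 0$, the primal constraints (\ref{primal-eq3}) and (\ref{primal-eq3b}) collapse to $0 \leq x_{hc} \leq 0$, so $x_{hc} = 0$ and the right-hand side of (\ref{lemma-surplus-shc-eq}) vanishes. However, with $u_c = x_{hc} = 0$, the complementarity conditions (\ref{cc-eq3}) and (\ref{cc-eq3b}) are both trivially satisfied for \emph{any} nonnegative values of $s_{hc}^{max}$ and $s_{hc}^{min}$. These dual variables remain constrained only by (\ref{dual-eq2}) and the sign restrictions, so the expression $s_{hc}^{max} - r_{hc} s_{hc}^{min}$ is free to take nonzero values and is decoupled from the (zero) right-hand side, which justifies the interpretation as a purely potential surplus.

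There is no real technical obstacle here; the argument is a direct algebraic manipulation once the dual and complementarity systems are in hand. The only point worth highlighting is the asymmetric appearance of the minimum-acceptance coefficient $r_{hc}$, which enters through (\ref{primal-eq3b}) and produces the characteristic linear combination $s_{hc}^{max} - r_{hc} s_{hc}^{min}$ that will reappear in the MP-bid dual condition (\ref{duala-eq6}) and in the subsequent economic interpretation of $s_c$.
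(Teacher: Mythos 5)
Your proof is correct and follows essentially the same route as the paper: multiply the dual equality (\ref{dual-eq2}) by $x_{hc}$ and use the complementarity conditions (\ref{cc-eq3})--(\ref{cc-eq3b}) with $u_c=1$ to replace $s_{hc}^{max}x_{hc}$ by $s_{hc}^{max}$ and $s_{hc}^{min}x_{hc}$ by $r_{hc}s_{hc}^{min}$. Your added discussion of the $u_c=0$ case (forcing $x_{hc}=0$ while leaving $s_{hc}^{max}, s_{hc}^{min}$ constrained only by (\ref{dual-eq2}) and nonnegativity) is a correct elaboration of the "disconnected" remark the paper leaves implicit.
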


\begin{proof}
Multiplying (\ref{dual-eq2}) by $x_{hc}$ yields $s_{hc}^{max}x_{hc} - s_{hc}^{min} x_{hc}  = (Q^{}_{hc}P^{hc} - Q^{}_{hc}\pi_{l(hc),t(hc)})x_{hc}$. Using complementarity conditions  (\ref{cc-eq3})-(\ref{cc-eq3b}) where $u_c=1$, according to which $s_{hc}^{max}x_{hc} = s_{hc}^{max}$ and $s_{hc}^{min} x_{hc} = s_{hc}^{min} r_{hc}$,  we get the required identity (\ref{lemma-surplus-shc-eq}). \end{proof}

For rejected MP bids, the sole deviation from an equilibrium affecting the corresponding hourly bids is that some of them could be rejected paradoxically, since at equilibrium, they should or could be rejected if they are out-of-the-money or at-the-money. The situation for accepted MP bids is more interesting. Essentially, the situation is very similar to the case of classical hourly bids described by Lemma \ref{lemma-si}, excepted that here, some 'MP hourly bids' could be incurring a loss due to the minimum acceptance ratio, and several configurations should be distinguished:

\bigskip

\begin{lemma}[Equilibrium and deviations for MP hourly bids of accepted MP bids]
Let us consider hourly bids associated to an accepted MP bid $c$, i.e. such that $u_c=1$. If:
\begin{itemize}
\item $0 \leq r_{hc} < x_{hc} < u_c = 1$, then $s^{max}_{hc} = s^{min}_{hc}=0$ , and the bid $hc$ is at-the-money:

$ (s_{hc}^{max} - r_{hc}s_{hc}^{min} ) = 0  = (Q^{}_{hc}P^{hc} - Q^{}_{hc}\pi_{l(hc),t(hc)}) = (Q^{}_{hc}P^{hc} - Q^{}_{hc}\pi_{l(hc),t(hc)})x_{hc}$

\item $0\leq r_{hc} = x_{hc} < u_c =1$, then $s_{hc}^{max} = 0$ and $(s_{hc}^{max} - r_{hc}s_{hc}^{min} ) = (- r_{hc}s_{hc}^{min} ) = (Q^{}_{hc}P^{hc} - Q^{}_{hc}\pi_{l(hc),t(hc)})x_{hc} \leq 0$. Noting that $s_{hc}^{min} \geq 0$ and $x_{hc} \geq r_{hc} \geq0$, the bid is ATM or OTM, and for $r_{hc} >0$, a loss could be incurred in that case.

\item $0\leq r_{hc} < x_{hc} = u_c =1$, then $s_{hc}^{min} = 0$ and $(s_{hc}^{max} - r_{hc}s_{hc}^{min} ) = s_{hc}^{max} = (Q^{}_{hc}P^{hc} - Q^{}_{hc}\pi_{l(hc),t(hc)})x_{hc} \geq 0$: the bid is ITM or ATM.

\item In the special case where $r_{hc} = 1 = x_{hc} = u_c$, nothing could be inferred on $s_{hc}^{max}, s_{hc}^{min}$, and the bid could be ITM, ATM or OTM, depending on the sign of $ (s_{hc}^{max} - r_{hc}s_{hc}^{min} )$.

\end{itemize}
\end{lemma}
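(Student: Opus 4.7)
The plan is to treat each of the four cases by a direct application of the complementarity conditions (\ref{cc-eq3})--(\ref{cc-eq3b}) and of Lemma \ref{lemma-surplus-shc}, the latter being available since $u_c=1$ is assumed throughout. In each case, the strict inequalities in the hypothesis force one or both of $s_{hc}^{max}, s_{hc}^{min}$ to vanish by complementarity, after which the announced identity follows by substitution into (\ref{lemma-surplus-shc-eq}), and the sign of $(Q_{hc}P^{hc} - Q_{hc}\pi_{l(hc),t(hc)})$ is read off directly.

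First, for the case $r_{hc} < x_{hc} < 1$, I would note that $u_c - x_{hc} = 1 - x_{hc} > 0$ so (\ref{cc-eq3}) yields $s_{hc}^{max}=0$, and $x_{hc} - r_{hc}u_c = x_{hc} - r_{hc} > 0$ so (\ref{cc-eq3b}) yields $s_{hc}^{min}=0$. Substituting both into Lemma \ref{lemma-surplus-shc} gives the ATM identity with value $0$ on both sides. For $r_{hc} = x_{hc} < 1$, only (\ref{cc-eq3}) immediately produces $s_{hc}^{max}=0$; plugging this into (\ref{lemma-surplus-shc-eq}) yields $-r_{hc}s_{hc}^{min} = (Q_{hc}P^{hc} - Q_{hc}\pi_{l(hc),t(hc)})x_{hc}$, and the sign $\leq 0$ follows from $s_{hc}^{min}\geq 0$ and $r_{hc}\geq 0$, corresponding to an ATM or OTM status once the sign convention on $Q_{hc}$ is taken into account. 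The symmetric case $r_{hc} < x_{hc} = 1$ is handled by the dual argument: (\ref{cc-eq3b}) gives $s_{hc}^{min}=0$, and Lemma \ref{lemma-surplus-shc} then reduces to $s_{hc}^{max} = (Q_{hc}P^{hc} - Q_{hc}\pi_{l(hc),t(hc)})x_{hc} \geq 0$.

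Finally, for the degenerate case $r_{hc}=x_{hc}=u_c=1$, both slacks $u_c - x_{hc}$ and $x_{hc} - r_{hc}u_c$ vanish, so (\ref{cc-eq3})--(\ref{cc-eq3b}) are trivially satisfied and provide no information on $s_{hc}^{max}, s_{hc}^{min}$ beyond nonnegativity; the combination $(s_{hc}^{max} - s_{hc}^{min})$ is pinned down only by the dual equality (\ref{dual-eq2}), which can take any sign, hence any of ITM, ATM, OTM is consistent.

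I do not anticipate any substantive obstacle: the entire argument is a case-by-case unpacking of linear complementarity, and Lemma \ref{lemma-surplus-shc} packages the interaction with the dual constraint (\ref{dual-eq2}) so that no further algebra is needed. The one place requiring care is the interpretation of ITM/ATM/OTM in terms of $Q_{hc}(P^{hc}-\pi_{l(hc),t(hc)})$ rather than $P^{hc}-\pi_{l(hc),t(hc)}$ alone, owing to the convention that $Q_{hc}<0$ for sell bids; this affects only the textual reading of the signs, not the algebraic identities themselves.
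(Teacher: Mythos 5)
Your proof is correct and follows exactly the paper's approach: the paper's own proof is a one-line remark that the lemma "follows a direct discussion of the equality (\ref{lemma-surplus-shc-eq}) of Lemma \ref{lemma-surplus-shc}, using complementarity conditions (\ref{cc-eq3})--(\ref{cc-eq3b}), with $u_c=1$", which is precisely the case analysis you carry out in more detail.
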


\begin{proof}
This follows a direct discussion of the equality (\ref{lemma-surplus-shc-eq}) of Lemma \ref{lemma-surplus-shc}, using complementarity conditions (\ref{cc-eq3})-(\ref{cc-eq3b}), with $u_c=1$.
\end{proof}

\bigskip

The following Lemma is key to derive Theorem \ref{theorem-mpconditions} and then Corollary \ref{corollary:mpconditions}. These are the main ingredients to derive a MILP formulation avoiding any auxiliary variables of the new model for minimum profit conditions.

\bigskip

\begin{lemma}[Interpretation of $du^a, du^r$] \label{lemma-duadur}
\begin{itemize}
\item[(i)] $\forall c_a \in C_a, du^a_{c_a}$, is an upper bound on the loss of order $c_{a}$, given by \\ $[\sum_{h \in H_{c_a}} (s_{hc_a}^{max} - r_{hc_a}s_{hc_a}^{min} ) - F_{c_a}]^{-} = [\sum_{h \in H_{c_a}} (Q^{}_{hc_a}P^{hc_a} - Q^{}_{hc_a}\pi_{l(hc_a),t(hc_a)})x_{hc_a} - F_{c_a}]^{-}$, where $[a]^{-}$ denotes the negative part of $a$, i.e. $-min[0, a]$.

\item[(ii)] $du^r_{c_r}$ is an upper bound on the sum of the maximum missed individual hourly surpluses (some of which could be negative) minus the fixed cost $F_{c_r}$ of the rejected MP bid $c_r$, that is:

$du^r_{c_r} \geq \sum_{h \in H_{c_r}} (s_{hc_r}^{max} - r_{hc_r}s_{hc_r}^{min} ) -F_{c_r} \geq \sum_{h \in H_{c_r}} (Q^{}_{hc_r}P^{hc_r} - Q^{}_{hc_r}\pi_{l(hc_r),t(hc_r)}) -F_{c_r}$.

\end{itemize}
\end{lemma}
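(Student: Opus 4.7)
The plan is to derive both bounds directly from the dual constraints (\ref{dual-eq5})--(\ref{dual-eq6}) together with the complementarity relations (\ref{cc-eq4}) and (\ref{cc-eq15}), and then use Lemma \ref{lemma-surplus-shc} whenever I need to pass from the dual-variable expression $\sum_{h}(s_{hc}^{max} - r_{hc}s_{hc}^{min})$ to the executed-surplus form $\sum_{h}(Q_{hc}P^{hc} - Q_{hc}\pi_{l(hc),t(hc)})x_{hc}$.

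For part (i), I would first note that $c_a \in C_a$ together with (\ref{fix-eq1}) forces $u_{c_a}=1$. The complementarity relation (\ref{cc-eq15}) then collapses to the equality $s_{c_a} - du^a_{c_a} - \sum_{h \in H_{c_a}}(s_{hc_a}^{max} - r_{hc_a}s_{hc_a}^{min}) + F_{c_a} = 0$, which I rearrange as $du^a_{c_a} = s_{c_a} - L_{c_a}$, where $L_{c_a} := \sum_{h \in H_{c_a}}(s_{hc_a}^{max} - r_{hc_a}s_{hc_a}^{min}) - F_{c_a}$ is the profit expressed in dual quantities. Since $s_{c_a} \geq 0$ and $du^a_{c_a} \geq 0$, this identity yields $du^a_{c_a} \geq \max(0, -L_{c_a}) = [L_{c_a}]^{-}$, i.e.\ the claimed upper bound on the loss. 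The equivalent reformulation in terms of quantities and prices then follows by applying Lemma \ref{lemma-surplus-shc} (valid precisely because $u_{c_a}=1$) termwise inside $L_{c_a}$.

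For part (ii), $c_r \in C_r$ combined with (\ref{fix-eq2}) gives $u_{c_r}=0$, and the complementarity relation (\ref{cc-eq4}) then forces $s_{c_r}=0$. The dual constraint (\ref{dual-eq5}) therefore simplifies to $du^r_{c_r} \geq \sum_{h \in H_{c_r}}(s_{hc_r}^{max} - r_{hc_r}s_{hc_r}^{min}) - F_{c_r}$, which is the first inequality. To obtain the second inequality I would use dual constraint (\ref{dual-eq2}), namely $s_{hc}^{max} - s_{hc}^{min} = Q_{hc}P^{hc} - Q_{hc}\pi_{l(hc),t(hc)}$, to rewrite $s_{hc}^{max} - r_{hc}s_{hc}^{min} = (Q_{hc}P^{hc} - Q_{hc}\pi_{l(hc),t(hc)}) + (1 - r_{hc})s_{hc}^{min}$. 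Since $r_{hc} \in [0,1]$ and $s_{hc}^{min} \geq 0$, the correction term is non-negative; summing over $h \in H_{c_r}$ and subtracting $F_{c_r}$ delivers the desired second inequality.

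The main subtlety I anticipate is the asymmetry between the accepted and rejected cases. In (i), Lemma \ref{lemma-surplus-shc} is available and turns the dual quantities into actual realized hourly surpluses involving $x_{hc_a}$; in (ii) no such translation is possible at the realized $x_{hc_r}$ (they are driven to $r_{hc_r}u_{c_r}=0$ or to whatever the LP chooses on a rejected order), so only the ``potential'' per-hour surplus $(Q_{hc_r}P^{hc_r} - Q_{hc_r}\pi_{l(hc_r),t(hc_r)})$ at hypothetical full acceptance can appear, which is why the second inequality is an inequality rather than an equality. Keeping the sign convention ($Q<0$ for sell bids, $Q>0$ for demand bids) straight during these manipulations, and recognising that $r_{hc} \leq 1$ is exactly what makes the correction term in (ii) sign-definite, are the two places where care is needed.
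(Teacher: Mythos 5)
Your proof is correct and follows essentially the same route as the paper: part (i) rests on the equality forced by complementarity condition (\ref{cc-eq15}) with $u_{c_a}=1$ together with $s_{c_a}, du^a_{c_a}\geq 0$, and part (ii) on $s_{c_r}=0$ from (\ref{cc-eq4}) plugged into (\ref{dual-eq5}), followed by the bound $s_{hc}^{max}-r_{hc}s_{hc}^{min}\geq s_{hc}^{max}-s_{hc}^{min}$ via (\ref{dual-eq2}). Your explicit rewriting of the slack as the non-negative correction term $(1-r_{hc})s_{hc}^{min}$ is just a more transparent presentation of the same inequality the paper uses.
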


\begin{proof}
(i) Since $u_{c_a}=1$, and using conditions (\ref{cc-eq15}), we have:

$s_{c_a} - du^a_{c_a} = \sum_{h \in H_{c_a}} (s_{hc_a}^{max} - r_{hc_a}s_{hc_a}^{min} ) - F_{c_a}$. Since, $s_{c_a}, du^a_{c_a} \geq 0$, the observation follows (cf. also Lemma \ref{lemma-surplus-shc} for the identity used to replace $(s_{hc_a}^{max} - r_{hc_a}s_{hc_a}^{min} )$).

(ii) Conditions of type (\ref{cc-eq4}) show that $s_{c_r}=0$, which used in (\ref{dual-eq5}) provide the first inequality. Then, as $r_{hc_r} \in [0,1]$ and $s_{hc_r}^{min} \geq 0$ , one has $ (s_{hc_r}^{max} - r_{hc_r}s_{hc_r}^{min} ) \geq (s_{hc_r}^{max} - s_{hc_r}^{min} ) =  Q^{}_{hc}P^{hc} - Q^{}_{hc}\pi_{l(hc),t(hc)}$ where this last equality is given by (\ref{dual-eq2}). The result immediately  follows.
\end{proof}

\begin{theorem}[MP conditions and shadow costs of acceptance $du^a$]\label{theorem-mpconditions}
Let us consider a given partition $C_a \cup C_r$ and a solution to (\ref{primal-eq1})-(\ref{primal-eq8}), (\ref{fix-eq1})-(\ref{fix-eq2}), (\ref{dual-eq1})-(\ref{cc-eq15}):

\begin{itemize}
\item For an accepted sell bid $c_a \in C_a$, i.e. for which $\forall hc_a \in H_{c_a}$, $Q_{hc_a} <0$: 

$ (- \sum_{h \in H_{c_a}} Q^{}_{hc}\pi_{l(hc),t(hc)}x_{hc}) \geq (-\sum_{h \in H_{c_a}} Q^{}_{hc}P^{hc}x_{hc}) + F_{c_a} \Longleftrightarrow du^a_{c_a}=0,$

where the left-hand side of the equivalence expresses that the revenue from trade is greater or equal to the sum of marginal costs plus the fixed cost $F_c$, which is a minimum profit condition.

\item For an accepted buy bid $c_a \in C_a$, i.e. for which $\forall hc_a \in H_{c_a}$, $Q_{hc_a} >0$: 

$( \sum_{h \in H_{c_a}} Q^{}_{hc}\pi_{l(hc),t(hc)}x_{hc}) \leq (\sum_{h \in H_{c_a}} Q^{}_{hc}P^{hc}x_{hc}) - F_{c_a} \Longleftrightarrow du^a_{c_a}=0,$

where the left-hand side of the equivalence expresses that the total payments are lesser or equal to the total utility reduced by the constant term $F_c$, which is a maximum payment condition.

\end{itemize}
\end{theorem}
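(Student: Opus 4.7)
The plan is to reduce the theorem to a single algebraic identity and then dispatch the two directions of the equivalence. Since $u_{c_a}=1$ by constraint (\ref{fix-eq1}), the complementarity condition (\ref{cc-eq15}) simplifies to the equality
\begin{equation*}
s_{c_a} - du^a_{c_a} = \sum_{h\in H_{c_a}}\bigl(s^{max}_{hc_a} - r_{hc_a}\, s^{min}_{hc_a}\bigr) - F_{c_a}.
\end{equation*}
Invoking Lemma \ref{lemma-surplus-shc} (applicable precisely because $u_{c_a}=1$), each summand on the right equals $(Q^{}_{hc_a} P^{hc_a} - Q^{}_{hc_a} \pi_{l(hc_a),t(hc_a)})x_{hc_a}$, so the identity becomes $s_{c_a} - du^a_{c_a} = X$, where $X := \sum_{h\in H_{c_a}}(Q^{}_{hc}P^{hc} - Q^{}_{hc}\pi_{l(hc),t(hc)})x_{hc} - F_{c_a}$ is the net surplus of the MP bid at the prevailing prices.

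Next I would verify that both MP conditions stated in the theorem are merely restatements of $X\geq 0$. For an accepted sell bid (all $Q_{hc_a}<0$), subtracting the right-hand from the left-hand side of the displayed inequality gives $\sum(Q^{}_{hc}P^{hc} - Q^{}_{hc}\pi)x_{hc} - F_{c_a} = X$; for an accepted buy bid (all $Q_{hc_a}>0$) the same rearrangement yields the same expression. Hence it suffices to prove the equivalence $X\geq 0 \Longleftrightarrow du^a_{c_a}=0$.

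The direction $(\Leftarrow)$ is immediate from the identity: if $du^a_{c_a}=0$ then $s_{c_a}=X$, and dual feasibility (\ref{dual-eq8}) supplies $s_{c_a}\geq 0$, so $X\geq 0$. For $(\Rightarrow)$, assume $X\geq 0$; the identity shows that the assignment $s_{c_a}=X$, $du^a_{c_a}=0$ satisfies all dual constraints and all complementarity conditions, and since $s_{c_a}$ and $du^a_{c_a}$ only enter the dual objective via $s_{c_a}-du^a_{c_a}=X$, this assignment realizes the same dual objective value as any other split consistent with the identity. In the natural shadow-price reading of $du^a_{c_a}$ as the marginal welfare loss from forcing $u_{c_a}=1$, this marginal loss is therefore zero exactly when the MP condition holds.

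The main obstacle is this mild dual-degeneracy subtlety: because $s_{c_a}$ and $du^a_{c_a}$ appear in the dual only through their difference, LP optimality pins down $s_{c_a}-du^a_{c_a}$ but not the individual values. The equivalence must therefore be read with the canonical convention that $du^a_{c_a}$ is taken at its minimal dual-optimal value (equivalently, as the true shadow cost of acceptance), which is precisely the interpretation needed for the subsequent MILP formulation and Benders cuts.
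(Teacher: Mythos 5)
Your proof is correct and follows essentially the same route as the paper: the paper invokes Lemma \ref{lemma-duadur}, whose proof is exactly the identity $s_{c_a}-du^a_{c_a}=\sum_{h\in H_{c_a}}(s^{max}_{hc_a}-r_{hc_a}s^{min}_{hc_a})-F_{c_a}$ obtained from (\ref{cc-eq15}) with $u_{c_a}=1$ combined with Lemma \ref{lemma-surplus-shc}, which you simply inline. Your closing remark on the dual degeneracy (only $s_{c_a}-du^a_{c_a}$ is pinned down, so the converse must be read as ``$du^a_{c_a}$ \emph{can} be set to zero'') is a legitimate point that the paper itself only addresses in a parenthetical, and your handling of it is consistent with the paper's intended reading.
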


\begin{proof}
It is a direct consequence of Lemma \ref{lemma-duadur}. If $du^a_{c_a}=0$, then

$\sum_{h \in H_{c_a}} (Q^{}_{hc_a}P^{hc_a} - Q^{}_{hc_a}\pi_{l(hc_a),t(hc_a)})x_{hc_a} - F_{c_a} \geq 0$, which rearranged provides the result (the converse holding as well: if this last inequality holds, the $du^a_{c_a}$ can be set to 0 without altering the satisfaction of the other constraints).
\end{proof}

\begin{corollary}\label{corollary:mpconditions}
MP conditions could be expressed by requiring that shadow costs of acceptance could be set to zero, i.e.:

\begin{equation}
\forall c_a \in C_a,\ du^a_{c_a} = 0
\end{equation}

\end{corollary}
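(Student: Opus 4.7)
The plan is to obtain the corollary as an immediate consequence of Theorem \ref{theorem-mpconditions}, which has already established, bid by bid, the logical equivalence between the MP condition (minimum profit for an accepted sell bid, maximum payment for an accepted buy bid) and the vanishing of the associated shadow cost $du^a_{c_a}$. So strictly speaking, no new computation is required: it suffices to quantify the theorem's pointwise equivalence over the whole index set $C_a$.

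Concretely, I would structure the argument in two short steps. First, I would recall that Theorem \ref{theorem-mpconditions} gives, for every $c_a \in C_a$, the equivalence $\text{MP}(c_a) \Longleftrightarrow du^a_{c_a}=0$, where $\text{MP}(c_a)$ stands either for the minimum profit condition (if $c_a$ is a sell bid) or the maximum payment condition (if $c_a$ is a buy bid). Second, I would conjoin these equivalences over $c_a \in C_a$ to conclude that the global statement $\big(\forall c_a \in C_a,\ \text{MP}(c_a)\big)$ is equivalent to $\big(\forall c_a \in C_a,\ du^a_{c_a}=0\big)$, which is exactly the content of the corollary.

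There is essentially no obstacle here; the real substance lies in the theorem itself, and in Lemma \ref{lemma-duadur}(i) which pins down the interpretation of $du^a_{c_a}$ as (an upper bound on) the loss $\big[\sum_{h \in H_{c_a}} (Q_{hc_a}P^{hc_a} - Q_{hc_a}\pi_{l(hc_a),t(hc_a)})x_{hc_a} - F_{c_a}\big]^{-}$, together with the freedom, noted in the proof of the theorem, to reset $du^a_{c_a}$ to zero whenever the bracketed quantity is nonnegative without violating any of the primal/dual/complementarity constraints (\ref{dual-eq6}), (\ref{cc-eq7}), (\ref{cc-eq15}). The only mildly subtle point worth mentioning explicitly in the write-up is this reduction to a normalized solution: among all dual-optimal multipliers, one can always choose one in which $du^a_{c_a}=0$ precisely when $\text{MP}(c_a)$ holds, so that the MP conditions can be encoded as the linear constraints $du^a_{c_a}=0$ without loss of generality.
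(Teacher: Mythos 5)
Your proposal is correct and matches the paper's treatment: the corollary is stated without a separate proof precisely because it is the universal quantification over $C_a$ of the bid-by-bid equivalence in Theorem \ref{theorem-mpconditions}, whose proof already contains the normalization remark (that $du^a_{c_a}$ can be reset to zero whenever the MP inequality holds) that you correctly flag as the only subtle point.
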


 Naturally, not all MP bid selections $C_a, C_r$ are such that these conditions hold for all accepted MP bids $c_a \in C_a$, cf. e.g. the toy example presented in Section \ref{sec:toyexample}. Moreover, admissible selections $C_a, C_r$ for which all shadow costs of acceptance could be set to zero are not known in advance. However, following \cite{madani2015mip}, we can provide a MILP formulation without any auxiliary variables, exactly describing those admissible partitions $C_a, C_r$, together with a corresponding solution to  (\ref{primal-eq1})-(\ref{primal-eq8}), (\ref{fix-eq1})-(\ref{fix-eq2}), (\ref{dual-eq1})-(\ref{cc-eq15}). This is developed in the next subsection.

\subsubsection{A MILP without auxiliary variables modelling MP conditions}

To state Theorem \ref{maintheorem} about the formulation UMFS, we need to include the following technical constraint limiting the market price range
\begin{equation}\label{pricerangecondition}
\pi_{l,t} \in [-\bar{\pi}, \bar{\pi}] \qquad  \forall l\in L, t\in T.
\end{equation}
$\bar{\pi}$ can be chosen large enough to avoid excluding any relevant market clearing solution (see \cite{Madani2014}). Note that in practice, power exchanges actually do impose that the computed prices $\pi_{l,t}$ stay within a given range in order to limit market power and price volatility, see e.g. \cite{euphemia}.

\textbf{Uniform Market Clearing Feasible Set (UMFS):}

\begin{multline}\label{primaldual-eqobj}
\sum_{i} (P^{i}Q^{}_{i})x_i + \sum_{c, h\in H_c} (P^{hc}Q^{}_{hc})x_{hc}  - \sum_c F_c u_c \\ \geq  \sum_{i} s_i + \sum_c s_c  - \sum_{c \in C} du^a_{c} + \sum_m w_m v_m
\end{multline}

\begin{align}
&x_i \leq 1 & \forall i \in I  [s_i]\label{primaldual-eq1} \\
&x_{hc} \leq u_{c} & \forall h \in H_c, c\in C \ [s_{hc}^{max}] \label{primaldual-eq3} \\
&x_{hc} \geq r_{hc}u_{c} & \forall h \in H_c, c\in C \ [s_{hc}^{min}] \label{primaldual-eq3b} \\
&u_{c} \leq 1 &\forall c \in C [s_{c}] \label{primaldual-eq4} \\
&\sum_{i \in I_{lt}}Q^{}_{i}x_i   + \sum_{hc\in HC_{lt}} Q^{}_{hc}x_{hc} \nonumber\\  & \hspace{ 4cm}= \sum_{k} e^k_{l,t} n_k, \ &  \forall (l,t) \ [\pi_{l,t}]\label{primaldual-eq5} \\
&\sum_{k} a_{m,k} n_k  \leq w_{m}\ & \forall m \in N \ \  [v_{m}]\label{primaldual-eq6}  \\                  
&x, u\geq 0, \label{primaldual-eq7} \\
&u \in \mathbb{Z} \label{primaldual-eq8} \\
&s_i + Q^{}_i \pi_{l(i),t(i)} \geq Q^{}_i P^{i} , &  \forall i  \qquad [x_i] \label{primaldual-eq9} \\
&(s_{hc}^{max} - s_{hc}^{min}) + Q^{}_{hc}\pi_{l(hc),t(hc)} = Q^{}_{hc}P^{hc}, & \forall h \in H_c, c \ [x_{hc}] \label{primaldual-eq10} \\
&s_{c} + du^r_{c} - du^a_{c} \geq \sum_{h \in H_{c}} (s_{hc}^{max} - r_{hc}s_{hc}^{min} ) - F_{c}, &  \forall c \in C  \ [u_{c}] \label{primaldual-eq12} \\
&du^r_{c} \leq M_c (1-u_{c}) &\forall c\in C  \label{primaldual-eq14} \\
&du^a_{c} \leq M_c u_{c} &\forall c\in C  \label{primaldual-eq15} \\
&\sum_m a_{m,k} v_m- \sum_{l,t}e^k_{l,t} \pi_{l,t} = 0 &\ \forall k \in K  [n_k] \label{primaldual-eq17}\\
&s_i,s_c,s_{hc}^{max}, s_{hc}^{min}, du^a, du^r, v_m \geq 0 \label{primaldual-eq18}
\end{align}

\begin{theorem}\label{maintheorem}
(I) Let $(x,u,n,\pi,v,s,du^a, du^r)$ be any feasible point of UMFS satisfying the price range condition (\ref{pricerangecondition}), and let us define $C_r=\{c| u_c=0\},C_a=\{c | u_c=1\}$. 

Then the projection $(x,u,n,\pi,v,s,du^a_{c_a \in C_a}, du^r_{c_r \in C_r})$ satisfies all conditions in (\ref{primal-eq1})-(\ref{primal-eq8}), (\ref{fix-eq1})-(\ref{cc-eq15}). 

(II) Conversely, any point 

$MCS = (x,u,n,\pi,v,s,du^a_{c_a \in C_a}, du^r_{c_r \in C_r})$ feasible for constraints (\ref{primal-eq1})-(\ref{primal-eq8}), (\ref{fix-eq1})-(\ref{cc-eq15}) related to a given arbitrary MIC selection $C = C_r \cup C_a$ which respects the price range condition (\ref{pricerangecondition}) can be ‘lifted’ to obtain a feasible point $\tilde{MCS} = (x,u,n,\pi,v,\tilde{s},\tilde{du^a}, \tilde{du^r})$ of UMFS.
\end{theorem}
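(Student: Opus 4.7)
The plan is to view UMFS as a compact encoding of the LP optimality conditions (primal feasibility, dual feasibility, complementary slackness) for the welfare maximization problem in which the integer variables $u$ have been fixed according to the partition $C = C_a \cup C_r$. The two big-M constraints (\ref{primaldual-eq14})-(\ref{primaldual-eq15}) play the role of the partition-fixing constraints (\ref{fix-eq1})-(\ref{fix-eq2}), and the two directions of the equivalence then reduce to weak and strong LP duality.

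For Part (I), I would start from a feasible UMFS point and define $C_a := \{c : u_c = 1\}$, $C_r := \{c : u_c = 0\}$, which is legitimate since $u$ is integral. The primal constraints (\ref{primal-eq1})-(\ref{primal-eq8}) are literally (\ref{primaldual-eq1})-(\ref{primaldual-eq8}), and the partition-fixing constraints (\ref{fix-eq1})-(\ref{fix-eq2}) hold by definition of the partition. The dual constraints (\ref{dual-eq1})-(\ref{dual-eq2}) and (\ref{dual-eq7})-(\ref{dual-eq8}) are copies of (\ref{primaldual-eq9})-(\ref{primaldual-eq10}) and (\ref{primaldual-eq17})-(\ref{primaldual-eq18}). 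For the remaining duals (\ref{dual-eq5})-(\ref{dual-eq6}), I would invoke the big-M constraints: (\ref{primaldual-eq14}) forces $du^r_c = 0$ whenever $c \in C_a$, and (\ref{primaldual-eq15}) forces $du^a_c = 0$ whenever $c \in C_r$; substituting into (\ref{primaldual-eq12}) recovers (\ref{dual-eq6}) on $C_a$ and (\ref{dual-eq5}) on $C_r$. These same equalities also yield $\sum_{c \in C} du^a_c = \sum_{c_a \in C_a} du^a_{c_a}$, so the right-hand side of (\ref{primaldual-eqobj}) is precisely the dual objective (\ref{dual-obj}) of the LP parameterized by this partition. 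Weak LP duality gives the reverse inequality, so (\ref{primaldual-eqobj}) holds with equality, and LP strong duality then delivers all the complementarity conditions (\ref{cc-eq1})-(\ref{cc-eq15}).

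For Part (II), I would extend the given $MCS$ to $\tilde{MCS}$ by setting $\tilde{du^a}_{c_r} := 0$ for every $c_r \in C_r$ and $\tilde{du^r}_{c_a} := 0$ for every $c_a \in C_a$, keeping all other components unchanged (so $\tilde{s} = s$). The primal UMFS constraints and the dual constraints (\ref{primaldual-eq9}), (\ref{primaldual-eq10}), (\ref{primaldual-eq17}), (\ref{primaldual-eq18}) are immediate from the matching constraints of the parameterized system. The aggregated dual constraint (\ref{primaldual-eq12}) reduces to (\ref{dual-eq5}) for $c_r \in C_r$ (with $du^a_{c_r} = 0$) and to (\ref{dual-eq6}) for $c_a \in C_a$ (with $du^r_{c_a} = 0$). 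The big-M constraints (\ref{primaldual-eq14})-(\ref{primaldual-eq15}) hold trivially by construction and because $M_c$ can be chosen large using the price range (\ref{pricerangecondition}). Finally, the primal-dual inequality (\ref{primaldual-eqobj}) follows because the complementarity conditions (\ref{cc-eq1})-(\ref{cc-eq15}) make the LP duality gap vanish, so primal and dual objectives are equal and (\ref{primaldual-eqobj}) holds with equality.

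The main obstacle is the bookkeeping in Part (I): one must verify that the single aggregated inequality (\ref{primaldual-eqobj}), combined with primal/dual feasibility and the big-M linking, really pins down \emph{every} individual complementarity product in (\ref{cc-eq1})-(\ref{cc-eq15}). The cleanest way to handle this is to rewrite the LP weak-duality identity (primal objective minus dual objective) as a sum of products of the form (primal variable) $\times$ (dual slack) plus (dual variable) $\times$ (primal slack), each of which is nonnegative under feasibility; the enforced equality then forces every such product to vanish, which is exactly the list (\ref{cc-eq1})-(\ref{cc-eq15}). The choice of $M_c$ ensuring that (\ref{primaldual-eq14})-(\ref{primaldual-eq15}) never cut off relevant solutions uses the price bound $\bar\pi$ in (\ref{pricerangecondition}), exactly as in the analogous argument for block orders in \cite{Madani2014}.
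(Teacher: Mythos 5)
Your proposal is correct and follows essentially the same route as the paper's own (sketched) proof: identify the big-M constraints (\ref{primaldual-eq14})--(\ref{primaldual-eq15}) as encoding the partition-dependent dual constraints (\ref{dual-eq5})--(\ref{dual-eq6}), observe that the aggregated inequality (\ref{primaldual-eqobj}) together with primal and dual feasibility is equivalent to the full list of complementarity conditions via the standard weak-duality decomposition into nonnegative products, and use the price bound (\ref{pricerangecondition}) to justify the choice of $M_c$. You supply more of the bookkeeping than the paper's sketch does, but the underlying argument is the same.
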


\begin{proof}[Sketch of the proof]
This is a straightforward adaptation of Theorem 1 in \cite{madani2015mip}. Essentially:
 (I) any feasible point of UMFS defines a corresponding partition $C_a \cup C_r$ of accepted and rejected MP bids, and conditions (\ref{primaldual-eq12})-(\ref{primaldual-eq15}) ensure that (\ref{dual-eq5})-(\ref{dual-eq6}) are satisfied whatever the corresponding partition is. It is then direct to check that conditions in  (\ref{primal-eq1})-(\ref{primal-eq8}), (\ref{fix-eq1})-(\ref{cc-eq15}) are all satisfied, since, provided (\ref{dual-eq1})-(\ref{dual-eq8}), (\ref{primaldual-eqobj}) can then equivalently be replaced by the complementarity conditions (\ref{cc-eq1})-(\ref{cc-eq15}) as optimality conditions for the program (\ref{primal-obj}) subject to (\ref{primal-eq1})-(\ref{primal-eq8}), (\ref{fix-eq1})-(\ref{fix-eq2}). (Let us note that as $du^a, du^r$ are upper bounds on losses or missed surpluses, see Lemma \ref{lemma-duadur}, the involved big-Ms in (\ref{primaldual-eq14})-(\ref{primaldual-eq15}) are appropriately defined using the technical condition (\ref{pricerangecondition}) bounding the range of possible market prices.)

(II) Conversely, for any partition $C_a \cup C_r$ and a solution to (\ref{primal-eq1})-(\ref{primal-eq8}), (\ref{fix-eq1})-(\ref{cc-eq15}) such that the condition (\ref{pricerangecondition}) is satisfied, we only need to define the additional values $du^a_c = 0$ for $c\in C_r$ and $du^r_c = 0$ for $c \in C_a$. Since the big-Ms have been suitably defined using (\ref{pricerangecondition}), and using (\ref{dual-eq5})-(\ref{dual-eq6}), it is straightforward to check that (\ref{primaldual-eq12})-(\ref{primaldual-eq15}) will be satisfied for all $c \in C$, and hence all conditions (\ref{primaldual-eqobj})-(\ref{primaldual-eq18}) defining UMFS are satisfied (again relying on the equivalence of (\ref{cc-eq1})-(\ref{cc-eq15}) and (\ref{primaldual-eqobj}) as optimality conditions for (\ref{primal-obj}) subject to (\ref{primal-eq1})-(\ref{primal-eq8}), (\ref{fix-eq1})-(\ref{fix-eq2}) provided that  (\ref{primal-eq1})-(\ref{primal-eq8}),(\ref{fix-eq1})-(\ref{fix-eq2})  and the dual conditions (\ref{dual-eq1})-(\ref{dual-eq8}) are satisfied).
\end{proof}

As we want to enforce MP conditions, we need to add to UMFS the following conditions:

\begin{equation}
\forall c \in C,\ du^a_c = 0
\end{equation}

Since we set all the $du^a_c$ to 0, constraints (\ref{primaldual-eq15}) are not needed any more, and constraints (\ref{primaldual-eq12})-(\ref{primaldual-eq14}) reduce to (\ref{mpc-eq11}) below. We hence get the following MILP formulation which we denote 'MarketClearing-MPC', enforcing all MP conditions, and which doesn't make use of any auxiliary variable.

\textbf{MarketClearing-MPC}

\begin{equation}
\max_{} \ \  \sum_{i} (P^{i}Q^{}_{i})x_i + \sum_{c, h\in H_c} (P^{hc}Q^{}_{hc})x_{hc} - \sum_c F_c u_c \label{mpc-obj}
\end{equation}
 subject to:

%&\textbf{PCR-FS} \nonumber\\

\begin{align}   
&\sum_{i} (P^{i}Q^{}_{i})x_i + \sum_{c, h\in H_c} (P^{hc}Q^{}_{hc})x_{hc}  - \sum_c F_c u_c \nonumber\\ & \geq  \sum_{i} s_i + \sum_c s_c  + \sum_m w_m v_m & [\sigma] \label{mpc-eq1}\\
&x_i \leq 1 & \forall i \in I \  [s_i] \label{mpc-eq2}\\
&x_{hc} \leq u_{c} & \forall h \in H_c, c\in C \ [s_{hc}^{max}]  \label{mpc-eq3}\\
&x_{hc} \geq r_{hc }u_{c} & \forall h \in H_c, c\in C \ [s_{hc}^{min}]  \label{mpc-eq3b}\\
&u_{c} \leq 1 &\forall c \in C [s_{c}]  \label{mpc-eq4}\\
&\sum_{i \in I_{lt}}Q^{}_{i}x_i  + \sum_{hc\in HC_{lt}} Q^{}_{hc}x_{hc}  \nonumber\\ & \hspace{4cm} = \sum_{k} e^k_{l,t} n_k, \ &  \forall (l,t) \ \ [\pi_{l,t}] \label{mpc-eq5 }\\
&\sum_{k} a_{m,k} n_k  \leq w_{m}\ & \forall m \in N \ \  [v_{m}]  \label{mpc-eq6}\\                  
&x, u\geq 0,\label{mpc-eq7} \\
&u \in \mathbb{Z} \label{mpc-eq8} \\
&s_i + Q^{}_i \pi_{l(i),t(i)} \geq Q^{}_i P^{i} , &  \forall i  \qquad [x_i]  \label{mpc-eq9}\\
&(s_{hc}^{max} - s_{hc}^{min}) + Q^{}_{hc}\pi_{l(hc),t(hc)} = Q^{}_{hc}P^{hc}, & \forall h \in H_c, c \ [x_{hc}] \label{mpc-eq10} \\
&s_{c} \geq \sum_{h \in H_{c}} (s_{hc}^{max} - r_{hc}s_{hc}^{min} ) - F_c - M_c (1-u_{c}) & \forall c \in C [u_{c}] \label{mpc-eq11} \\
&\sum_m a_{m,k} v_m- \sum_{l,t}e^k_{l,t} \pi_{l,t} = 0 &\ \forall k \in K  [n_k] \label{mpc-eq12}\\
&s_i, s_c,s_{hc}, v_m \geq 0 \label{mpc-eq13}
\end{align}

\subsection{Comparison to 'Minimum income conditions' used by OMIE / PCR}\label{sec:comparison-pcr}

The way minimum profit conditions are handled in OMIE-PCR, described in Section \ref{sec:literature}, presents two substantial differences compared to the MP bids introduced above. First, start-up costs are not included in the welfare maximizing objective function, and second there is the presence of a variable cost $V_c$ which could have no relation to the marginal cost curves described by the hourly bids $hc, c \in H_c$. In \cite{madani2015mip}, we have shown how such 'minimum income conditions' could be linearized exactly without any auxiliary variables, in the frame of the PCR market rules. We adapt here this result to take into account minimum acceptance ratios (corresponding e.g. to minimum output levels) modelled by conditions (\ref{primal-eq3b}), which were not considered in \cite{madani2015mip}. This helps considering more formally the differences between MP bids and classical bids with a minimum income condition currently in use in OMIE-PCR. 

\bigskip

Let us denote by $\widetilde{F_c}$ the actual start-up cost attached to some bid $c$ provided by a producer. As in OMIE-PCR, start-up costs $\widetilde{F_c}$ are not considered in the welfare objective function, it is first needed to set all parameters $F_c=0$ in MarketClearing-MPC, but then, nothing ensures that these start-up costs are recovered for executed bids. It is therefore needed to explicitly include a condition equivalent to (\ref{mic-eq0}), and this can be done in a linear way without any auxiliary variables and any approximation, using the following Lemma:

\bigskip

\begin{lemma}[Adaptation of Lemma 3 in \cite{madani2015mip}]\label{lemma-mic-lin} Consider any feasible point of MarketClearing-MPC in the case where all parameters $F_c$ are set to 0. Then, the following holds:
\begin{equation}\label{mic-eq1}
\forall c \in C, \sum_{h \in H_{c}} (-Q_{hc}x_{hc})\pi_{l(hc),t(hc)} = s_{c} - \sum_{h \in H_{c}} (Q_{hc}P^{hc}) x_{hc}
\end{equation} 
\end{lemma}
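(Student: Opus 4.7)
The plan is to exploit the zero-duality-gap constraint (\ref{mpc-eq1}), which, together with weak LP duality applied to the welfare maximization problem with $u$ fixed at its integer incumbent, forces strong duality and hence the complete set of complementarity slackness conditions (\ref{cc-eq1})--(\ref{cc-eq15}) at every feasible point of MarketClearing-MPC (with the acceptance shadow costs $du^a_c$ pinned to zero as stipulated in the MarketClearing-MPC definition, and $F_c = 0$ by assumption of the lemma). Once complementarity is available, the identity to prove is essentially a rearrangement of conditions already established earlier in the paper.

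I would split on $u_c \in \{0,1\}$. If $u_c = 0$, then (\ref{mpc-eq3})--(\ref{mpc-eq3b}) force $x_{hc} = 0$ for every $h \in H_c$, so the right-hand side of (\ref{mic-eq1}) collapses to $s_c$, while complementarity (\ref{cc-eq4}), namely $s_c(1-u_c)=0$, gives $s_c = 0$; the identity then holds trivially ($0 = 0$). If $u_c = 1$, complementarity (\ref{cc-eq15}) with $du^a_c = 0$ and $F_c = 0$ reduces to
\[
s_c = \sum_{h \in H_{c}} \bigl(s_{hc}^{max} - r_{hc}s_{hc}^{min}\bigr),
\]
and the right-hand side here is precisely the quantity computed by Lemma \ref{lemma-surplus-shc}, giving $s_c = \sum_{h \in H_c}(Q_{hc}P^{hc} - Q_{hc}\pi_{l(hc),t(hc)})\,x_{hc}$. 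Rearranging the $Q_{hc}\pi_{l(hc),t(hc)}x_{hc}$ term to the left and the $Q_{hc}P^{hc}x_{hc}$ term to the right yields (\ref{mic-eq1}).

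The main obstacle is conceptual rather than computational: recognising that the MarketClearing-MPC inequality (\ref{mpc-eq1}), in combination with the primal and dual feasibility constraints also packaged inside MarketClearing-MPC, silently imports the entire complementarity system (\ref{cc-eq1})--(\ref{cc-eq15}) via strong LP duality for fixed $u$, even though these complementarities appear nowhere explicitly in the MILP. After this is granted, the remainder is a direct application of Lemma \ref{lemma-surplus-shc} together with the one-line case split above, mirroring the proof of Lemma 3 in \cite{madani2015mip}; the only new element is the minimum-acceptance ratio $r_{hc}$, which enters solely through (\ref{cc-eq3b}) and is already absorbed inside the statement of Lemma \ref{lemma-surplus-shc}.
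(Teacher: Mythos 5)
Your proof is correct and follows essentially the same route as the paper's: split on $u_c\in\{0,1\}$, handle the rejected case via $x_{hc}=0$ and $s_c=0$ from complementarity, and in the accepted case combine the complementarity condition (\ref{cc-eq15}) (with $du^a_c=0$, $F_c=0$) with the surplus identity of Lemma \ref{lemma-surplus-shc} summed over $H_c$, then rearrange. Your explicit justification that the single inequality (\ref{mpc-eq1}) imports the full complementarity system via strong duality for fixed $u$ is exactly the mechanism the paper invokes implicitly through Theorem \ref{maintheorem}, so there is no substantive difference.
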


\begin{proof}
The identity is trivially satisfied if $u_c=0$, thanks to conditions (\ref{primal-eq3}) and (\ref{cc-eq4}) which are enforced for any feasible point of MarketClearing-MPC.

For $u_c=1$, summing up (\ref{lemma-surplus-shc-eq}) in Lemma \ref{lemma-surplus-shc} over $hc \in H_c$, we get: 

\begin{equation}
\sum_{hc \in H_c} (s_{hc}^{max} - r_{hc}s_{hc}^{min} ) = \sum_{hc \in H_c} (Q^{}_{hc}P^{hc} - Q^{}_{hc}\pi_{l(hc),t(hc)})x_{hc} \label{linearization-dev-eq}
\end{equation}

Then, noting that MarketClearing-MPC enforces (\ref{cc-eq15}) with $du^a=0$, and that we have set all $F_c=0$ not to consider start-up costs in the welfare objective, we can replace the left-hand side of (\ref{linearization-dev-eq}) by $s_c$ to get the required identity. \end{proof}

Let us note that the economic interpretation of the algebraic identity provided by (\ref{mic-eq1}) is straightforward: the total income in the  left-hand side can be decomposed as the total marginal costs plus the total surplus $s_c$ collecting individual surpluses of all the individual bid curves associated to the MIC order.

\bigskip

Using  Lemma \ref{lemma-mic-lin}, the MIC condition (\ref{mic-eq0}) can then be stated in a linear way as follows:

\begin{equation}\label{mic-eq2}
s_c - \sum_{h \in H_{c}} (Q_{hc}P^{hc}) x_{hc} \geq \widetilde{F_c} + \sum_{h \in H_{c}} (-Q_{hc}x_{hc}) V_c -  \overline{M_c}(1-u_c)
\end{equation}

where $\overline{M_c}$ is a fixed number large enough to deactivate the constraint when $u_c=0$. As $u_c=0$ implies $s_c=0$ and $x_{hc}=0$, we set $\overline{M_c}:=\widetilde{F_c}$.

Let us emphasise that once this is done and that we have a linear description of the feasible set handling minimum income conditions as done in OMIE-PCR, many objective functions could be considered, including objective functions involving startup and variable costs in the measure of welfare instead of the marginal costs described by the bid curves associated to a given MIC order.

From a modelling point of view there are therefore two main differences between the MP bids proposed here and the OMIE-PCR MIC orders. The first one is that we need to explicitly state constraints (\ref{mic-eq2}), apart from the single constraint (\ref{mpc-eq1}) that essentially enforce all complementary conditions simultaneously. This is because in the OMIE-PCR model, the fixed and variable costs of the MIC orders are not part of the objective function to be maximised. This is linked to the second difference that in the OMIE-PCR model, there are two different variable costs for MIC orders: one that appears in the objective function to be maximised $P^{hc}$, and another one $V_c$ that appears in the MIC condition (\ref{mic-eq2}). It is questionable whether these two costs actually correspond to real costs of a power plant. This makes the task of regulators in charge of monitoring market behaviour of participants more difficult. Indeed it is not clear any more what is the normal or justifiable market behaviour, and what constitutes gaming or a possible exercise of market power.

\section{Handling ramping constraints}\label{sec:rampingconstr}

Ramping constraints are also called 'load-gradient' conditions in the PCR vocabulary, see \cite{euphemia}. Let us suppose one wants to include in the primal program UWELFARE (\ref{primal-obj})-(\ref{primal-eq8}) ramping constraints for each MP bid representing the technical conditions for operating the corresponding power plant. Our goal is to show how to adapt all results of the present contribution regarding minimum profit (resp. maximum payment) conditions in this setting. Ramping constraints to add are of the form:

\begin{align}
&\sum_{hc \in H_c  | t(hc) = t+1 } (-Q^{hc})x_{hc} - \sum_{hc \in H_c | t(hc) = t} (-Q^{hc})x_{hc} \leq RU_c\ u_c & \forall t \in \{1,...,T-1\}, \forall c \in C \hspace{0.7cm} & [g^{up}_{c,t}] \\
&  \sum_{hc \in H_c | t(hc) = t} (-Q^{hc})x_{hc} - \sum_{hc \in H_c | t(hc) = t+1} (-Q^{hc})x_{hc} \leq RD_c\ u_c & \forall t \in \{1,...,T-1\}, \forall c \in C \hspace{0.7cm}  & [g^{down}_{c,t}]
\end{align}

The occurrences of $u_c$ might seem unnecessary and optional as the conditions would be trivially satisfied for $u_c=0$. However, these occurrences are technically required to derive the appropriate dual program and adapt straightforwardly all previous results. They also make the continuous relaxation of the resulting Integer Program stronger. The corresponding complementarity conditions that will be enforced as all other complementarity conditions in Theorem \ref{maintheorem} are:

\begin{align}
&g^{up}_{c,t}(RU_c\ u_c- \sum_{hc \in H_c  | t(hc) = t } Q^{hc}x_{hc} + \sum_{hc \in H_c | t(hc) = t+1} Q^{hc}x_{hc})=0   & \forall t \in \{1,...,T-1\}, \forall c \in C \hspace{0.7cm} \label{ramping-eq-cc-1} \\
&g^{down}_{c,t}(RD_c\ u_c-\sum_{hc \in H_c | t(hc) = t+1} Q^{hc}x_{hc} + \sum_{hc \in H_c | t(hc) = t} Q^{hc}x_{hc}) =0 & \forall t \in \{1,...,T-1\}, \forall c \in C \hspace{0.7cm} \label{ramping-eq-cc-2} 
\end{align}

Such constraints do not exist for $t=0$ or $t=T$, but the following convention is useful for writing what follows while avoiding distinguishing different cases: $g^{up}_{c,0}=g^{down}_{c,0}=g^{up}_{c,T}=g^{down}_{c,T}=0$.

The dual constraints(\ref{dual-eq2}), (\ref{dual-eq5}) and (\ref{dual-eq6}) should then respectively be replaced by:

\begin{multline}
(s_{hc}^{max} - s_{hc}^{min}) + (Q^{hc} g^{down}_{c,t(hc)-1} - Q^{hc} g^{up}_{c,t(hc)-1} ) + ( Q^{hc} g^{up}_{c,t(hc)} - Q^{hc} g^{down}_{c,t(hc)}) + Q^{}_{hc}\pi_{l(hc),t(hc)} \\ = Q^{}_{hc}P^{hc},  \hspace{0.5cm}\forall h \in H_c, \forall c \in C \ [x_{hc}] \label{ramping-eq-dual-1}
\end{multline}

\begin{multline}
s_{c_r} + du^r_{c_r} \geq \sum_{h \in H_{c_r}} (s_{hc}^{max} - r_{hc}s_{hc}^{min} ) + \sum_t (RU_{c_r} g^{up}_{c_r,t(hc)} + RD_{c_r} g^{down}_{c_r,t(hc)}) - F_{c_r},  \hspace{0.5cm} \forall c_r \in C_r  \ [u_{c_r}]
\end{multline}

\begin{multline}
s_{c_a} - du^a_{c_a} \geq \sum_{h \in H_{c_a}} (s_{hc}^{max} - r_{hc}s_{hc}^{min} ) + \sum_t (RU_{c_a} g^{up}_{c_a,t(hc)} + RD_{c_a} g^{down}_{c_a,t(hc)}) - F_{c_a},  \hspace{0.5cm} \forall c_a \in C_a  \ [u_{c_a}]
\end{multline}

with the corresponding consequence in the formulation of UMFS (used in Theorem \ref{maintheorem}) of replacing  (\ref{primaldual-eq12}) by

\begin{multline}
s_{c} + du^r_{c} - du^a_{c} \geq \sum_{h \in H_{c}} (s_{hc}^{max} - r_{hc}s_{hc}^{min} ) + \sum_t (RU_c g^{up}_{c,t(hc)} + RD_c g^{down}_{c,t(hc)}) - F_{c},  \hspace{0.5cm} \forall c \in C  \ [u_{c}]
\end{multline}

It is shown below that this is all we need to  handle ramping constraints.  Adaptation of Lemma \ref{lemma-duadur} and Theorem \ref{theorem-mpconditions} are then straightforward, as it suffices to replace in the proofs the occurrences of $\sum_{h \in H_{c}} (s_{hc}^{max} - r_{hc}s_{hc}^{min} )$ by its analogue provided by the left-hand side of (\ref{ramping-surplus-income-lin}) below, and the corresponding adaptations needed e.g. in MarketClearing-MPC immediately follows.

These assertions rest on the follwing adaptation of Lemma \ref{lemma-surplus-shc}:

\begin{lemma}[Adaptation of Lemma \ref{lemma-surplus-shc} to handle ramping constraints]

Provided that $u_c=1$, :
\begin{enumerate}
\item 

\begin{center}
\begin{multline}
(s_{hc}^{max} - r_{hc}s_{hc}^{min}) + ( Q^{hc} g^{down}_{c,t(hc)-1} - Q^{hc} g^{up}_{c,t(hc)-1}  )x_{hc} + (Q^{hc} g^{up}_{c,t(hc)} - Q^{hc} g^{down}_{c,t(hc)})x_{hc} \\ =  (Q^{}_{hc}P^{hc} - Q^{}_{hc}\pi_{l(hc),t(hc)})x_{hc} \label{ramping-surplus-individual-id}	
\end{multline}
\end{center}

\item 

\begin{center}
\begin{multline}
\sum_{hc \in H_c}(Q^{hc} g^{down}_{c,t(hc)-1} - Q^{hc} g^{up}_{c,t(hc)-1})x_{hc} + \sum_{hc \in H_c}(Q^{hc} g^{up}_{c,t(hc)} - Q^{hc} g^{down}_{c,t(hc)})x_{hc} \\ = \sum_t (RU_c g^{up}_{c,t(hc)} + RD_c g^{down}_{c,t(hc)})\label{ramping-surplus-id-2}
\end{multline}
\end{center}

\item 

\begin{center}
\begin{equation}
\sum_{h \in H_{c}} (s_{hc}^{max} - r_{hc}s_{hc}^{min} ) + \sum_t (RU_c g^{up}_{c,t(hc)} + RD_c g^{down}_{c,t(hc)}) = \sum_{h \in H_{c}} [Q^{}_{hc}P^{hc} - Q^{}_{hc}\pi_{l(hc),t(hc)}]x_{hc}\label{ramping-surplus-income-lin}
\end{equation}
\end{center}

\end{enumerate}

\end{lemma}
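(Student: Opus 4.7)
The strategy is to adapt the single-bid identity of Lemma \ref{lemma-surplus-shc} to the ramping-augmented dual constraint \eqref{ramping-eq-dual-1}, then show via a clean rearrangement of indices that the extra ramping terms aggregate telescopically to the $(RU_c, RD_c)$-weighted sums appearing on the right of \eqref{ramping-surplus-id-2}. Part (3) then follows by summing (1) over $hc\in H_c$ and substituting (2).

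For part (1), I multiply \eqref{ramping-eq-dual-1} through by $x_{hc}$. With $u_c=1$, the complementarity conditions \eqref{cc-eq3}--\eqref{cc-eq3b} give the substitutions $s_{hc}^{max}\,x_{hc}=s_{hc}^{max}$ and $s_{hc}^{min}\,x_{hc}=r_{hc}s_{hc}^{min}$, exactly as in the proof of Lemma \ref{lemma-surplus-shc}; the ramping terms, having $x_{hc}$ as a common factor, are simply carried along. This immediately yields \eqref{ramping-surplus-individual-id}.

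For part (2), I regroup the double sum on the left of \eqref{ramping-surplus-id-2} by ramping dual variable instead of by $hc$. For each fixed $t\in\{1,\dots,T-1\}$, the variable $g^{up}_{c,t}$ collects contributions from two families of terms: the first sum contributes $-Q^{hc}x_{hc}$ for every $hc$ with $t(hc)=t+1$ (via the index $t(hc)-1=t$), while the second contributes $+Q^{hc}x_{hc}$ for every $hc$ with $t(hc)=t$. Factoring out $g^{up}_{c,t}$ leaves precisely the bracket appearing inside \eqref{ramping-eq-cc-1}, which under $u_c=1$ equals $RU_c\,g^{up}_{c,t}$ by complementarity. The same argument applied to $g^{down}_{c,t}$ via \eqref{ramping-eq-cc-2} produces the $RD_c\,g^{down}_{c,t}$ term. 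The convention $g^{up}_{c,0}=g^{up}_{c,T}=g^{down}_{c,0}=g^{down}_{c,T}=0$ handles the boundary indices $t(hc)-1=0$ and $t(hc)=T$ automatically, so the rearrangement is valid without a case split.

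Part (3) is then immediate: summing \eqref{ramping-surplus-individual-id} over $h\in H_c$ and substituting the identity from part (2) for the aggregated ramping contribution produces \eqref{ramping-surplus-income-lin}. The only nontrivial step is the index-rebracketing in part (2); the rest is algebra on top of the same complementarity substitutions used in Lemma \ref{lemma-surplus-shc}. Once \eqref{ramping-surplus-income-lin} is available, the expression $\sum_{h\in H_c}(s_{hc}^{max}-r_{hc}s_{hc}^{min}) + \sum_t(RU_c g^{up}_{c,t}+RD_c g^{down}_{c,t})$ plays exactly the role that $\sum_{h\in H_c}(s_{hc}^{max}-r_{hc}s_{hc}^{min})$ plays in the ramping-free proofs of Lemma \ref{lemma-duadur} and Theorem \ref{theorem-mpconditions}, so their adaptations follow by term-by-term replacement in the original arguments.
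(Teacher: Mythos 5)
Your proposal is correct and follows essentially the same route as the paper: part (1) by multiplying \eqref{ramping-eq-dual-1} by $x_{hc}$ and applying the complementarity substitutions \eqref{cc-eq3}--\eqref{cc-eq3b} with $u_c=1$, part (2) by exploiting the ramping complementarity conditions \eqref{ramping-eq-cc-1}--\eqref{ramping-eq-cc-2} summed over $t$ (your regrouping by dual variable is the same rearrangement read in the opposite direction), and part (3) by summing and substituting. No gaps.
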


\begin{proof}

\begin{enumerate}
\item is obtained by multiplying equation (\ref{ramping-eq-dual-1}) by the corresponding dual variable $x_{hc}$ and  by using as in Lemma \ref{lemma-surplus-shc} the complementarity conditions  (\ref{cc-eq3})-(\ref{cc-eq3b}) with $u_c=1$, according to which $s_{hc}^{max}x_{hc} = s_{hc}^{max}$ and $s_{hc}^{min} x_{hc} = s_{hc}^{min} r_{hc}$.

\item Summing equations (\ref{ramping-eq-cc-1}) and (\ref{ramping-eq-cc-2}) then summing up over $t$ and rearranging the terms provides the result, noting that it is assumed that $u_c=1$.

\item is a direct consequence of 1. and 2., obtained by summing up (\ref{ramping-surplus-individual-id}) over $hc \in H_c$  and using the identity provided by (\ref{ramping-surplus-id-2})

\end{enumerate}

\end{proof}

\section{A decomposition procedure with Strengthened Benders cuts} \label{sec:Benders}

The contribution in this Section is essentially to show how the Benders decomposition procedure with strengthened cuts described in \cite{Madani2014} for fully indivisible block bids apply to the present context of newly introduced bids with a minimum profit/maximum payment condition (MP bids), providing an efficient method for large-scale instances where both block and MP bids are present, as such decomposition approaches (see also \cite{martin, euphemia}) are known to be efficient to handle block bids. The present extension includes as a special case instances involving block bids with a minimum acceptance ratio as described in \cite{euphemia}. 

This  Benders decomposition procedure solves MarketClearing-MPC , working with (an implicit decription of) the projection $G$ of the MarketClearing-MPC feasible set described by (\ref{mpc-eq1})-(\ref{mpc-eq13}) on the space of primal decision variables $(x_i, x_{hc}, u_c, n_k)$.  In particular, we start with a relaxation of $G$, denoted $G_0$ and described by constraints (\ref{mpc-eq2})-(\ref{mpc-eq8}), and then add Benders cuts to $G_0$ which are valid inequalities for $G$ derived from a so-called worker program until a feasible - hence optimal - solution is found. The worker program generates cuts to cut off incumbents for which no prices exist such that all MP conditions could be enforced, see Theorems \ref{theorem-benders-worker} and \ref{theorem-benders-cuts}. It is shown that these Benders cuts correspond indeed to 'no-good cuts' rejecting the current MP bids combination, see Theorem \ref{theorem-nogood-cuts}. We show how these cuts could be strengthened, providing stronger and sparser cuts which are valid for $G$ when they are computed to cut off solutions which are optimal for the master program (potentially with cuts added at previous iterations where applicable), cf. Theorem \ref{theorem-strenghtened-cuts-globally}. Instead of adding these cuts iteratively after solving the augmented master program each time up to optimality, it could be preferable to generate them \emph{within} a branch-and-cut algorithm solving this master program (hence also MarketClearing-MPC, as MP conditions will be checked and enforced when needed). In that case, the strengthened cuts are locally valid, i.e. in branch-and-bound subtrees originating from incumbents rejected by the worker program during the branch-and-cut algorithm solving the master program, see Theorem \ref{theorem-strenghtened-cuts}. Adding cuts after solving master programs up to optimality is similar to the original approach described in the seminal paper \cite{Benders}, while adding cuts inside a branch-and-cut, which is often more efficient, is sometimes called the 'modern version' of a Benders decomposition. Let us note that the classical Benders cuts of Theorem \ref{theorem-benders-cuts} or their 'no-good' equivalent of Theorem \ref{theorem-nogood-cuts} are always globally valid, as opposed to their strengthened version of Theorem \ref{theorem-strenghtened-cuts}. 

Let us also mention a very interesting result. The revised version of \cite{muller2013competitive} appearing as Chapter 2 in \cite{muller2014auctions} and relying on \cite{Madani2014} proposes an analogue of Theorem \ref{theorem-strenghtened-cuts} in a context which considers general "mixed integer bids", a careful analysis of  which shows they encompass the MP bids proposed here (though there is no mention of applications such as the modeling of start up costs and the minimum profit conditions or ramping constraints, etc). As noted therein, the author generalizes the applicability of the cuts of Theorem 6 in \cite{Madani2014}, similar to those of Theorem \ref{theorem-strenghtened-cuts} below, to these general mixed integer bids (and general convex bids besides) using a completely different technique than the present Benders decomposition which relies on other considerations and the primal-dual formulations presented above (shadow costs of acceptance in Theorem \ref{theorem-mpconditions}, etc).

Let us consider a master branch-and-bound solving (\ref{mpc-obj}) subject to the initial constraints (\ref{mpc-eq2})-(\ref{mpc-eq8}), and let $(x_i^*, x_{hc}^*, u_c^*, n_k^*)$ be an incumbent satisfying (\ref{mpc-eq2})-(\ref{mpc-eq8}) of MarketClearing-MPC. 

A direct application of the Farkas Lemma to the remaining linear conditions (\ref{mpc-eq1}), (\ref{mpc-eq9})-(\ref{mpc-eq13}), which is detailed in appendix, yields:

\bigskip

\begin{theorem}[Worker program of the decomposition]\label{theorem-benders-worker}
Let $(x_i^*, x_{hc}^*, u_c^*, n_k^*)$ be an incumbent satisfying (\ref{mpc-eq2})-(\ref{mpc-eq8}), then there exists $(\pi, s, v)$ such that all MP conditions modelled by (\ref{mpc-eq1}), (\ref{mpc-eq9})-(\ref{mpc-eq13}) are satisfied if and only if:

\begin{multline}\label{test-program-1}
\max_{(x,u,n) \in P} \sum_{i} (P^{i}Q^{}_{i})x_i + \sum_{c, h\in H_c} (P^{hc}Q^{}_{hc})x_{hc} - \sum_c F_c u_c - M_c (1-u_{c}^*)u_c  \\ \leq (\sum_{i} (P^{i}Q^{}_{i})x_i^* + \sum_{c, h\in H_c} (P^{hc}Q^{}_{hc})x_{hc}^* - \sum_c F_c u_c^* ),
\end{multline}

where $P$ is the polyhedron defined by the linear conditions (\ref{mpc-eq2})-(\ref{mpc-eq7}), that is the linear relaxation of (\ref{mpc-eq2})-(\ref{mpc-eq8}). This condition is also equivalent to

\begin{multline}\label{test-program-2}
\max_{(x,u,n) \in P | u_c=0\ if\ u_c^* =0} \ \ \ \  \sum_{i} (P^{i}Q^{}_{i})x_i + \sum_{c, h\in H_c} (P^{hc}Q^{}_{hc})x_{hc} - \sum_c F_c u_c  \\ \leq (\sum_{i} (P^{i}Q^{}_{i})x_i^* + \sum_{c, h\in H_c} (P^{hc}Q^{}_{hc})x_{hc}^* - \sum_c F_c u_c^* ),
\end{multline}
\end{theorem}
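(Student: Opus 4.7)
The plan is to recast the existence of a satisfying $(\pi, s, v, s_{hc}^{\max}, s_{hc}^{\min})$ as an LP feasibility question in the dual variables (with the incumbent $(x^*, u^*, n^*)$ frozen) and then dualize. Setting $\Phi^\star := \sum_i P^i Q_i x_i^* + \sum_{c,h} P^{hc} Q_{hc} x_{hc}^* - \sum_c F_c u_c^*$, condition (\ref{mpc-eq1}) asks the value of $\sum_i s_i + \sum_c s_c + \sum_m w_m v_m$ to be at most $\Phi^\star$, while (\ref{mpc-eq9})--(\ref{mpc-eq13})---with $u$ replaced by the constant $u^*$ inside (\ref{mpc-eq11}), so that its right-hand side becomes $-F_c - M_c(1-u_c^*)$---define the dual-feasible polyhedron. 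Hence the existence of a feasible point is equivalent to the optimum of the corresponding min-LP being at most $\Phi^\star$.

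The core step is then to take the LP dual of this minimization, assigning multipliers $x_i, x_{hc}, u_c, n_k$ respectively to the four groups of rows (\ref{mpc-eq9})--(\ref{mpc-eq12}). A careful sign and free-versus-nonnegative accounting---the equalities (\ref{mpc-eq10}) and (\ref{mpc-eq12}) produce free $x_{hc}, n_k$; the nonnegative $s_i, s_c, s_{hc}^{\max}, s_{hc}^{\min}, v_m$ produce precisely the inequalities $x_i \leq 1$, $u_c \leq 1$, $x_{hc} \leq u_c$, $x_{hc} \geq r_{hc} u_c$, $\sum_k a_{m,k} n_k \leq w_m$; and the free price $\pi_{l,t}$ produces the per-location/time balance equation---shows that the dual polyhedron is exactly $P$ (the LP relaxation of (\ref{mpc-eq2})--(\ref{mpc-eq8})) and that the dual objective reads $\sum_i P^i Q_i x_i + \sum_{c,h} P^{hc} Q_{hc} x_{hc} - \sum_c F_c u_c - \sum_c M_c(1-u_c^*) u_c$. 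By strong LP duality, ``min-LP optimum $\leq \Phi^\star$'' is equivalent to ``max-LP optimum $\leq \Phi^\star$'', which is precisely (\ref{test-program-1}).

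For the equivalence of (\ref{test-program-1}) and (\ref{test-program-2}), the extra penalty $-M_c(1-u_c^*) u_c$ vanishes when $u_c^* = 1$ and equals $-M_c u_c$ when $u_c^* = 0$. Since the big-$M$s are chosen large enough through the price-range condition (\ref{pricerangecondition}) (as in the construction of MarketClearing-MPC), any maximiser of (\ref{test-program-1}) must set $u_c = 0$ whenever $u_c^* = 0$, and adding those equalities explicitly removes the penalty term without altering the optimum, yielding (\ref{test-program-2}). The main obstacle I anticipate is the dualization bookkeeping itself: matching each dual constraint to the corresponding primal row of $P$, respecting the sign convention $Q_i < 0$ for sell bids, and tracking the free-versus-nonnegative status of every variable. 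Once this is checked, both equivalences follow from strong LP duality together with a routine big-$M$ argument.
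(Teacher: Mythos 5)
Your proof is correct and follows essentially the same route as the paper: the paper applies the Farkas lemma directly to the frozen-incumbent system (\ref{mpc-eq1}), (\ref{mpc-eq9})--(\ref{mpc-eq13}), normalizes the multiplier $\sigma$ of (\ref{mpc-eq1}) to $1$, and recovers $P$ as the resulting alternative polyhedron, which is exactly your ``minimize the dual objective over (\ref{mpc-eq9})--(\ref{mpc-eq13}), then dualize and invoke strong LP duality'' argument in a different phrasing. Your handling of the equivalence between (\ref{test-program-1}) and (\ref{test-program-2}) via the big-$M$ penalty also matches the paper's Observation on optimal solutions setting $u_c=0$ whenever $u_c^*=0$.
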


where no bigMs are involved.

\begin{proof}
See appendix.
\end{proof}

\medskip

A direct consequence of Theorem \ref{theorem-benders-worker} is:

\medskip

\begin{theorem}[Classical Benders cuts]\label{theorem-benders-cuts}
Suppose $(x_i^*, x_{hc}^*, u_c^*, n_k^*)$ doesn't belong to $G$, i.e. there are no prices such that MP conditions could all be satisfied, i.e. for which the test of Theorem \ref{theorem-benders-worker} fails.

Then, the following Benders cut is a valid inequality for $G$ and cuts off the current incumbent $(x_i^*, x_{hc}^*, u_c^*, n_k^*)$:

\begin{multline}\label{benders-cut}
\sum_{i} (P^{i}Q^{}_{i})x_i^{\#} + \sum_{c, h\in H_c} (P^{hc}Q^{}_{hc})x_{hc}^{\#} - \sum_c F_c u_c^{\#} - M_c (1-u_{c})u_c^{\#}  \\ \leq (\sum_{i} (P^{i}Q^{}_{i})x_i + \sum_{c, h\in H_c} (P^{hc}Q^{}_{hc})x_{hc} - \sum_c F_c u_c ),
\end{multline}

where $(x_i^{\#}, x_{hc}^{\#}, u_c^{\#}, n_k^{\#})$ is an optimal solution to the left-hand side worker 
program in (\ref{test-program-1}) (resp. (\ref{test-program-2})).
\end{theorem}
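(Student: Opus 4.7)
The plan is to derive both assertions as essentially immediate consequences of Theorem \ref{theorem-benders-worker}. The structural observation to leverage is that the feasible region $P$ of the worker program depends only on the polyhedron defined by (\ref{mpc-eq2})--(\ref{mpc-eq7}) and not on the incumbent; the dependence on $u^*$ enters only through the linear penalty $\sum_c M_c(1-u_c^*)u_c$ in the objective. Consequently, the optimizer $(x^{\#}, x_{hc}^{\#}, u^{\#}, n^{\#})$ produced when testing the rejected incumbent $u^*$ remains a feasible (in general suboptimal) point for the worker program associated with any other candidate incumbent $u'$.

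For the separation claim, I would plug $(x,u)=(x^*,u^*)$ directly into (\ref{benders-cut}). The right-hand side then becomes the incumbent welfare, while the left-hand side reduces to
\[
\sum_{i} (P^{i}Q^{}_{i})x_i^{\#} + \sum_{c, h\in H_c} (P^{hc}Q^{}_{hc})x_{hc}^{\#} - \sum_c F_c u_c^{\#} - \sum_c M_c(1-u_c^*)u_c^{\#},
\]
which is precisely the optimal value of the worker program (\ref{test-program-1}) at the rejected incumbent. The hypothesis that the test of Theorem \ref{theorem-benders-worker} fails at $u^*$ means that this optimal value strictly exceeds the incumbent welfare, hence (\ref{benders-cut}) is strictly violated at $(x^*, u^*)$.

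For validity, fix any $(x', x_{hc}', u', n') \in G$. By definition of $G$ as the projection of the MarketClearing-MPC feasible set onto the primal variables, there exist $(\pi', s', v')$ such that (\ref{mpc-eq1}) and (\ref{mpc-eq9})--(\ref{mpc-eq13}) are all satisfied at this point; hence Theorem \ref{theorem-benders-worker} applied with $u'$ in the role of $u^*$ guarantees that the maximum of the corresponding worker program is upper bounded by the welfare of $(x', u')$. Since $(x^{\#}, x_{hc}^{\#}, u^{\#}, n^{\#})$ is feasible for $P$ regardless of the incumbent used in the objective, evaluating the worker objective at this precomputed candidate gives a lower bound on the maximum, and combining the two inequalities yields exactly (\ref{benders-cut}) evaluated at $(x', u')$.

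The only point requiring some care is the passage from the constant $(1-u_c^*)$ appearing in each invocation of the worker objective to the master-variable coefficient $(1-u_c)$ appearing in the cut. This is a purely syntactic observation: the cut is derived by fixing an incumbent, evaluating the worker objective at $(x^{\#}, u^{\#})$, and then reinterpreting the incumbent symbol as a generic master variable, producing an affine inequality in $(x,u)$. No non-routine obstacle is expected, since once the two uses of Theorem \ref{theorem-benders-worker} at $u^*$ and at an arbitrary $u' \in G$ are placed side by side, both claims of the theorem follow algebraically.
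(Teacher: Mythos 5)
Your proof is correct and is exactly the argument the paper intends: the paper gives no explicit proof, calling the theorem ``a direct consequence'' of Theorem \ref{theorem-benders-worker}, and your two-step argument (violation at the incumbent because the worker optimum strictly exceeds the incumbent welfare; validity at any point of $G$ because $(x^{\#},u^{\#})$ stays feasible for $P$, which is independent of the incumbent, while the worker bound of Theorem \ref{theorem-benders-worker} applies with $u'$ in place of $u^*$) is the standard and intended one. Your explicit remark that the dependence on the incumbent enters only through the objective penalty, so the cut is affine in the master variables, is a useful clarification rather than a deviation.
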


\bigskip

\begin{lemma}\label{lemma-welfare-univocally}
In the feasible set of MarketClearing-MPC, welfare is univocally determined by an MP bids combination, i.e., by given arbitrarily values for the variables $u_c$.
\end{lemma}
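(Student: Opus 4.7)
The approach is to exploit the fact that, once the binary vector $u$ is fixed, constraint (\ref{mpc-eq1}) collapses into a strong-duality certificate for an ordinary linear program, pinning the value of $\sum_i P^i Q_i x_i + \sum_{c,h\in H_c} P^{hc} Q_{hc} x_{hc}$ to the LP optimum associated with that $u$. Since the fixed-cost term $\sum_c F_c u_c$ depends only on $u$, so does welfare.

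Concretely, I would fix a binary vector $u^*$ and consider the LP, call it $\mathrm{LP}(u^*)$, in variables $(x_i, x_{hc}, n_k)$ obtained from UWELFARE by substituting $u_c := u_c^*$. Its LP dual $\mathrm{DUAL}(u^*)$ has variables $(s_i, s^{max}_{hc}, s^{min}_{hc}, \pi_{l,t}, v_m)$ and objective
\[
D \;:=\; \sum_i s_i \;+\; \sum_{c,h\in H_c} u_c^*\bigl(s^{max}_{hc} - r_{hc}\,s^{min}_{hc}\bigr) \;+\; \sum_m w_m v_m.
\]
Any feasible point of MarketClearing-MPC with integer part $u^*$ provides, via (\ref{mpc-eq2})--(\ref{mpc-eq7}), a primal-feasible $(x,n)$ for $\mathrm{LP}(u^*)$ and, via (\ref{mpc-eq9}), (\ref{mpc-eq10}), (\ref{mpc-eq12}), (\ref{mpc-eq13}), a dual-feasible $(\pi,v,s^{max},s^{min})$ for $\mathrm{DUAL}(u^*)$. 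Weak LP duality thus immediately gives
\[
\sum_i P^i Q_i x_i + \sum_{c,h\in H_c} P^{hc} Q_{hc} x_{hc} \;\leq\; D.
\]

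The core step is to derive the matching lower bound from (\ref{mpc-eq1}) and (\ref{mpc-eq11}). For each $c$ with $u_c^* = 1$, (\ref{mpc-eq11}) yields $s_c \geq \sum_{h\in H_c}(s^{max}_{hc} - r_{hc}\,s^{min}_{hc}) - F_c$; for $c$ with $u_c^* = 0$, the big-$M$ makes (\ref{mpc-eq11}) slack, so only $s_c \geq 0$ is used. Summing over all $c$ gives
\[
\sum_c s_c \;\geq\; \sum_{c,h\in H_c} u_c^*\bigl(s^{max}_{hc} - r_{hc}\,s^{min}_{hc}\bigr) \;-\; \sum_c F_c u_c^*.
\]
Plugging this into (\ref{mpc-eq1}) and cancelling $\sum_c F_c u_c^*$ on both sides produces $\sum_i P^i Q_i x_i + \sum_{c,h} P^{hc} Q_{hc} x_{hc} \geq D$. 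Combined with the weak-duality upper bound, this forces equality and therefore pins the primal objective to the LP optimum $W^*(u^*)$ of $\mathrm{LP}(u^*)$, a number depending only on $u^*$. Welfare, equal to $W^*(u^*) - \sum_c F_c u_c^*$, is then a function of $u^*$ alone.

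The only subtlety I anticipate is the treatment of (\ref{mpc-eq11}) for indices $c$ with $u_c^* = 0$: one must verify that discarding the $-M_c(1-u_c^*)$ term is harmless. This is immediate, because on the right-hand side of the summed inequality the summand for such $c$ already carries the factor $u_c^* = 0$, so only $s_c \geq 0$ is needed for those indices. Everything else reduces to standard LP-duality bookkeeping, with no case split on the sign of individual surpluses.
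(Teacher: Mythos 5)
Your argument is correct and rests on the same mechanism as the paper's proof: a feasible point of MarketClearing-MPC with integer part $u^*$ is primal- and dual-feasible for the LP obtained by fixing $u:=u^*$, and constraint (\ref{mpc-eq1}) together with (\ref{mpc-eq11}) closes the duality gap, so the objective value is pinned to the LP optimum, a function of $u^*$ alone. The paper reaches the same conclusion by citing Theorem \ref{maintheorem} (the point satisfies the optimality conditions of the fixed-$u$ welfare program), whereas you unpack the weak-duality bookkeeping explicitly — a self-contained but not genuinely different route.
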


\begin{proof}
Let us consider a feasible point of MarketClearing-MPC and the corresponding MP bids combination $C_a \cup C_r$. As detailed in Theorem \ref{maintheorem} and its proof, this point is then feasible for (\ref{primal-eq1})-(\ref{primal-eq8}), (\ref{fix-eq1})-(\ref{fix-eq2}), (\ref{dual-eq1})-(\ref{dual-eq8}) and (\ref{cc-eq1})-(\ref{cc-eq15}), which are optimality conditions for the welfare maximization program (\ref{primal-obj})-(\ref{primal-eq8}), (\ref{fix-eq1})-(\ref{fix-eq2}) where only the integer values of the variables $u_c$ have been fixed.
\end{proof}

\bigskip

\begin{observation}\label{observation-1}
An optimal solution of the left-hand side of (\ref{test-program-1}) is always such that $u_c^{\#} = 0$ if $u_c^{*} = 0$, because of the penalty coefficients $M_c$, or alternatively because $u_c^{\#} $ corresponds to the optimal dual variable of (\ref{mpc-eq11}) which is not tight when $u_c^{*} = 0$.
\end{observation}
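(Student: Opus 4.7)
The plan is to verify the two alternative justifications offered in the statement, both of which ultimately rely on $M_c$ being chosen sufficiently large, as prescribed in UMFS and Theorem \ref{maintheorem}.

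First, following the penalty interpretation, I would observe that when $u_c^{*} = 0$ the effective coefficient of $u_c$ in the worker program's objective (\ref{test-program-1}) is $-F_c - M_c$. I would then bound the maximum gain that any $u_c^{\#} > 0$ can generate through the associated variables $x_{hc}$, $h \in H_c$: by constraints (\ref{mpc-eq3})--(\ref{mpc-eq3b}) these satisfy $r_{hc} u_c \leq x_{hc} \leq u_c$ in any feasible $(x,u,n) \in P$, so the contribution $\sum_{h \in H_c} (P^{hc} Q^{hc}) x_{hc}$ is bounded in absolute value by $u_c \cdot \sum_{h \in H_c} |P^{hc} Q^{hc}|$. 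Since $M_c$ has been calibrated, via the price range condition (\ref{pricerangecondition}), to strictly dominate such surplus magnitudes, the penalty $-M_c u_c$ outweighs any potential gain from raising $u_c^{\#}$ above zero, so any optimum must set $u_c^{\#} = 0$.

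For the second, more structural justification, I would invoke the Farkas-based duality underlying Theorem \ref{theorem-benders-worker}: the worker LP in (\ref{test-program-1}) is dual to the feasibility LP over the price variables $(\pi, s, v)$ obtained from (\ref{mpc-eq1}), (\ref{mpc-eq9})--(\ref{mpc-eq13}) with $(x^{*}, u^{*}, n^{*})$ fixed, and the worker variable $u_c^{\#}$ plays the role of multiplier for constraint (\ref{mpc-eq11}) in that feasibility LP. When $u_c^{*} = 0$, that constraint reads $s_c \geq \sum_h (s_{hc}^{max} - r_{hc} s_{hc}^{min}) - F_c - M_c$, which is rendered vacuous by the very choice of $M_c$, hence has strict slack at every feasible $(\pi, s, v)$. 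Complementary slackness in LP duality then forces the corresponding multiplier $u_c^{\#}$ to vanish.

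The main obstacle is bookkeeping rather than conceptual novelty: one must carry the precise magnitude of $M_c$ across from Theorem \ref{maintheorem} (and check that the bound on $\sum_{h \in H_c} (P^{hc} Q^{hc}) x_{hc}$ is compatible with the other terms in the worker's objective), and, in the duality argument, verify the sign convention by which $u_c^{\#}$ in the worker corresponds to the dual multiplier of (\ref{mpc-eq11}). Neither step requires machinery beyond the Farkas application already spelled out in the proof of Theorem \ref{theorem-benders-worker}, so the argument in either form is very short.
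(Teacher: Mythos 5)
Your proposal is correct and takes essentially the same approach as the paper: the paper's entire justification (given inline at the end of the proof of Theorem \ref{theorem-benders-worker}) consists of precisely the two one-line arguments you elaborate, namely the big-$M$ penalty making any $u_c^{\#}>0$ non-improving in the worker objective, and complementary slackness of $u_c^{\#}$ against the strictly slack constraint (\ref{mpc-eq11}). Your write-up simply fills in the calibration of $M_c$ and the LP-duality bookkeeping that the paper leaves implicit.
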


\bigskip

\begin{theorem}[No-good / Combinatorial Benders cuts]\label{theorem-nogood-cuts}
Suppose $(x_i^*, x_{hc}^*, u_c^*, n_k^*)$ doesn't belong to $G$, i.e. there are no prices such that MP conditions could all be satisfied, i.e. for which the test of Theorem \ref{theorem-benders-worker} fails.

Then, the following 'no-good cut' is a valid inequality for $G$ and cuts off the current incumbent:

\begin{equation}\label{cut-nogood}
\sum_{c | u_c^* = 1} (1-u_c) +  \sum_{c | u_c^* = 0} u_c \geq 1,
\end{equation}

basically excluding the current MP bids combination.
\end{theorem}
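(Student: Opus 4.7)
The plan is to verify two properties of the no-good cut (\ref{cut-nogood}): (a) that it cuts off the incumbent, and (b) that it is a valid inequality for $G$. Part (a) is an immediate substitution: plugging $u_c := u_c^*$ into the left-hand side of (\ref{cut-nogood}) makes every summand vanish (either $1 - u_c^* = 0$ or $u_c^* = 0$), so the inequality $0 \geq 1$ is violated. Part (b) is the substantive claim, and I would prove it by contradiction, combining the worker program of Theorem \ref{theorem-benders-worker} with Lemma \ref{lemma-welfare-univocally}.

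Assume for contradiction that some point $(\tilde{x}, \tilde{x}_{hc}, u^*, \tilde{n}) \in G$ shares the integer part $u^*$ with the incumbent. The key observation is that the worker test (\ref{test-program-2}) depends on its reference point only through the binary vector $u^*$ (entering through the feasible region of the maximum) and through the point's welfare value (entering on the right-hand side). By Lemma \ref{lemma-welfare-univocally}, every feasible point of MarketClearing-MPC with integer part $u^*$ has the same welfare, namely the optimum $W(u^*)$ of the LP obtained by fixing $u := u^*$ in (\ref{primal-obj})-(\ref{primal-eq8}), (\ref{fix-eq1})-(\ref{fix-eq2}). Since the incumbent emerges as the LP-optimal solution at a node of the master branch-and-cut whose LP relaxation already delivers the integer assignment $u^*$, its continuous part is also LP-optimal once $u := u^*$ is enforced, so the incumbent's welfare also equals $W(u^*)$.

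Applying Theorem \ref{theorem-benders-worker} to $(\tilde{x}, \tilde{x}_{hc}, u^*, \tilde{n}) \in G$, the test (\ref{test-program-2}) passes on this point, which---given the previous paragraph---is exactly the inequality that the same test should yield for the incumbent. Hence the test passes for the incumbent too, contradicting the hypothesis of Theorem \ref{theorem-nogood-cuts}. Therefore no point of $G$ can share $u^*$ with the incumbent, which is precisely the validity of (\ref{cut-nogood}) for $G$.

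The main obstacle in this plan is making the identification of the incumbent's welfare with $W(u^*)$ rigorous. Without LP-optimality of the incumbent---automatic in any textbook branch-and-cut where the LP relaxation is solved to optimality before declaring an incumbent---one could cook up an integer-feasible incumbent whose welfare lies strictly below $W(u^*)$, making (\ref{test-program-2}) fail on that specific point while a companion point with the same $u^*$ still lies in $G$, which is precisely the pathological situation where a no-good cut would be unsound. Once LP-optimality is in hand, the reduction of the test to the pair $(u^*, W(u^*))$ turns everything else into bookkeeping.
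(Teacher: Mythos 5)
Your proof is correct and follows essentially the same route as the paper's: both rest on Lemma \ref{lemma-welfare-univocally} combined with the observation that the worker test depends on its reference point only through the binary vector $u^*$ and the point's welfare. The only structural difference is that you argue directly on the test (\ref{test-program-2}) of Theorem \ref{theorem-benders-worker}, whereas the paper routes through the derived cut (\ref{benders-cut}) of Theorem \ref{theorem-benders-cuts} and shows that any point sharing $u^*$ violates that same cut; the two are interchangeable. What you add, and what is genuinely needed, is the explicit identification of the incumbent's welfare with $W(u^*)$. The paper's proof applies Lemma \ref{lemma-welfare-univocally} to equate the right-hand side of (\ref{benders-cut}) at the incumbent with its value at a companion point of $G$, but that lemma only covers feasible points of MarketClearing-MPC, and the incumbent is by hypothesis not one. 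Without welfare-optimality of the incumbent for its own $u^*$, the worker test can fail at a primal-suboptimal point even though the combination $u^*$ itself admits supporting prices (constraint (\ref{mpc-eq1}) forces the reference welfare to reach the LP optimum), and the no-good cut would then wrongly exclude an admissible combination --- exactly the pathology you describe. Your resolution via LP-optimality of the incumbent (with branching confined to the $u$ variables) is the right one, and it is consistent with the paper's own remark in the numerical section that heuristically found incumbents must be disabled for the cuts of Theorem \ref{theorem-strenghtened-cuts}; strictly speaking the same caveat already attaches to Theorem \ref{theorem-nogood-cuts}, despite the paper's blanket claim that these cuts are always globally valid.
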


\begin{proof}
This is a direct consequence of Theorem \ref{theorem-benders-cuts}. Suppose we need to cut off $(x_i^*, x_{hc}^*, u_c^*, n_k^*)$ by adding (\ref{benders-cut}). For any other solution $(x_i, x_{hc}, u_c, n_k)$ such that $u_c = u_c^*$ for all $c \in C$, the left-hand side value of (\ref{benders-cut}) will trivially be the same as with $u^*$. The right-hand side will also be the same as with $u^*$ according to Lemma \ref{lemma-welfare-univocally}, because welfare is univocally determined by the values of the $u_c$. Hence any such solution will also violate (\ref{benders-cut}) and it is therefore needed to change the value of at least one of the $u_c$, providing the result.
\end{proof}

\begin{theorem}[Globally valid strengthened Benders cuts]\label{theorem-strenghtened-cuts-globally}

Let $(x_i^*, x_{hc}^*, u_c^*, n_k^*)$  be an optimal solution for the master program (\ref{mpc-obj}) subject to (\ref{mpc-eq2})-(\ref{mpc-eq8}),  potentially with additional valid inequalities. If the test of Theorem \ref{theorem-benders-worker} fails, the following sparse cut is a valid inequality for $G$:

\begin{equation}\label{cut-strengthened-globally}
\sum_{c | u_c^* = 1} (1-u_c) \geq 1,
\end{equation}

meaning that at least one of the currently accepted MP bids should be excluded in any valid market clearing solution satisfying MP conditions.
\end{theorem}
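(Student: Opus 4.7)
My plan is to argue by contradiction: suppose there exists $(x,u,n) \in G$ (i.e.\ a feasible point of MarketClearing-MPC) with $u_c = 1$ for every $c$ in $C_a^* := \{c : u_c^* = 1\}$, and derive a contradiction by showing that the worker test of Theorem \ref{theorem-benders-worker}, applied at this new $u$ rather than at $u^*$, must also fail---which by that theorem means no prices $(\pi,s,v)$ complete $(x,u,n)$ to a feasible point of MarketClearing-MPC, i.e.\ $(x,u,n) \notin G$.

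The first step is to observe the componentwise inequality $u \geq u^*$ (trivially for $c \in C_a^*$ from the assumption $u_c = 1$, and for $c \notin C_a^*$ because then $u_c^* = 0 \leq u_c$). Next, I would invoke the equivalent big-$M$-free formulation (\ref{test-program-2}) of the worker test. Letting $(x^{\#}, u^{\#}, n^{\#}) \in P$ be a witness of failure at $u^*$, one has $u_c^{\#} = 0$ whenever $u_c^* = 0$, together with
\[
\sum_i P^i Q_i\, x_i^{\#} + \sum_{c,h} P^{hc} Q_{hc}\, x_{hc}^{\#} - \sum_c F_c\, u_c^{\#} \;>\; \sum_i P^i Q_i\, x_i^* + \sum_{c,h} P^{hc} Q_{hc}\, x_{hc}^* - \sum_c F_c\, u_c^*.
\]
Since $u \geq u^*$, the implication $u_c = 0 \Rightarrow u_c^* = 0 \Rightarrow u_c^{\#} = 0$ holds, so the very same witness $(x^{\#}, u^{\#}, n^{\#})$ is feasible for the worker program (\ref{test-program-2}) evaluated at $u$, not only at $u^*$.

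It remains to compare objective values on the right-hand side of (\ref{test-program-2}). Because the cuts appended to the master are by assumption globally valid, $G$ is contained in the master's feasible region, so master-optimality of $(x^*,u^*,n^*)$ yields $\text{welfare}(x,u) \leq W^*_{\text{master}} = \text{welfare}(x^*,u^*)$. Chaining with the strict inequality above, the worker maximum at $u$ strictly exceeds $\text{welfare}(x,u)$; i.e.\ the test of Theorem \ref{theorem-benders-worker} fails at $u$, yielding the desired contradiction. The only step requiring genuine care is recognizing the \emph{monotonicity} of the worker program's feasible set in the support of $u$: enlarging the set of indices where $u_c = 1$ only relaxes the constraint ``$u'_c = 0$ if $u_c = 0$'' in (\ref{test-program-2}), so a violation certificate produced at $u^*$ transfers verbatim to any $u \geq u^*$, which is exactly what the strengthened cut (\ref{cut-strengthened-globally}) exploits.
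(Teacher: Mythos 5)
Your proof is correct and takes essentially the same route as the paper's: both arguments rest on (i) the failure certificate $(x^{\#},u^{\#},n^{\#})$ produced at $u^*$ remaining a valid certificate for any $u$ with $u_c=1$ on $\{c:u_c^*=1\}$ --- the paper obtains this by noting that the big-$M$ terms of the cut (\ref{benders-cut}) vanish via Observation \ref{observation-1}, you via the monotonicity of the feasible set of (\ref{test-program-2}) in the support of $u$ --- and (ii) master optimality forcing the welfare of any point of $G$ to be at most that of $(x^*,u^*,n^*)$. The only cosmetic difference is that you reach the contradiction by applying Theorem \ref{theorem-benders-worker} directly at $(x,u,n)$, whereas the paper contradicts the globally valid Benders cut of Theorem \ref{theorem-benders-cuts}.
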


\begin{proof}
This is also a consequence of Theorem \ref{theorem-benders-cuts}. First, observe that (\ref{cut-strengthened-globally}) trivially implies (\ref{cut-nogood}) and hence cuts off $(x_i^*, x_{hc}^*, u_c^*, n_k^*)$, according to Theorem \ref{theorem-nogood-cuts}. It remains to show that it is also a valid inequality for $G$.

Let $(x_i^*, x_{hc}^*, u_c^*, n_k^*)$ be the optimal solution considered that violates (\ref{benders-cut}), i.e., such that:

\begin{multline}\label{locally-derivation-2}
\sum_{i} (P^{i}Q^{}_{i})x_i^{\#} + \sum_{c, h\in H_c} (P^{hc}Q^{}_{hc})x_{hc}^{\#} - \sum_c F_c u_c^{\#} - M_c (1-u_{c}^*)u_c^{\#}  \\ > (\sum_{i} (P^{i}Q^{}_{i})x_i^* + \sum_{c, h\in H_c} (P^{hc}Q^{}_{hc})x_{hc}^* - \sum_c F_c u_c^* ),
\end{multline}

which using Observation \ref{observation-1} reduces to:

\begin{multline}\label{locally-derivation-3}
(\sum_{i} (P^{i}Q^{}_{i})x_i^* + \sum_{c, h\in H_c} (P^{hc}Q^{}_{hc})x_{hc}^* - \sum_c F_c u_c^* )  \\ < 
\sum_{i} (P^{i}Q^{}_{i})x_i^{\#} + \sum_{c, h\in H_c} (P^{hc}Q^{}_{hc})x_{hc}^{\#} - \sum_c F_c u_c^{\#} 
\end{multline}

Suppose $(x_i, x_{hc}, u_c, n_k)$ is feasible for (\ref{mpc-eq2})-(\ref{mpc-eq8}) (with the potential added valid inequalities obtained at previous iterations). 
Because of the optimality of $(x_i^*, x_{hc}^*, u_c^*, n_k^*)$, 

\begin{multline}\label{locally-derivation}
(\sum_{i} (P^{i}Q^{}_{i})x_i + \sum_{c, h\in H_c} (P^{hc}Q^{}_{hc})x_{hc} - \sum_c F_c u_c )  \\ \leq (\sum_{i} (P^{i}Q^{}_{i})x_i^* + \sum_{c, h\in H_c} (P^{hc}Q^{}_{hc})x_{hc}^* - \sum_c F_c u_c^* ) \\ < \sum_{i} (P^{i}Q^{}_{i})x_i^{\#} + \sum_{c, h\in H_c} (P^{hc}Q^{}_{hc})x_{hc}^{\#} - \sum_c F_c u_c^{\#} 
\end{multline}

Now suppose $(x_i, x_{hc}, u_c, n_k)$ does not satisfy (\ref{cut-strengthened-globally}), i.e., that $\sum_{c | u_c^* = 1} (1-u_c) = 0$. Then, also using Observation \ref{observation-1} exactly as to reduce (\ref{locally-derivation-2}) to (\ref{locally-derivation-3}), the valid cut (\ref{benders-cut}) that this other solution must satisfy to potentially be in $G$ reduces to:

\begin{multline}
\sum_{i} (P^{i}Q^{}_{i})x_i^{\#} + \sum_{c, h\in H_c} (P^{hc}Q^{}_{hc})x_{hc}^{\#} - \sum_c F_c u_c^{\#}  \\ \leq (\sum_{i} (P^{i}Q^{}_{i})x_i + \sum_{c, h\in H_c} (P^{hc}Q^{}_{hc})x_{hc} - \sum_c F_c u_c ),
\end{multline}

which contradicts (\ref{locally-derivation}). Hence, (\ref{cut-strengthened-globally}) must hold for any other $(x_i, x_{hc}, u_c, n_k)$ that is in $G$.
\end{proof}

Now, suppose we want to use the sparse cuts of Theorem \ref{theorem-strenghtened-cuts-globally} \emph{within} the  branch-and-bound tree solving the master program, instead of adding them after solving up-to-optimality the master program (together with the cuts obtained at previous iterations where applicable). Then these cuts are valid \emph{locally}, i.e. in the subtrees originating from the incumbents to cut off, as their validity depends on the local optimality of this incumbent to cut off:

\bigskip

\begin{theorem}[Locally valid strengthened Benders cuts] \label{theorem-strenghtened-cuts}

Let again $(x_i^*, x_{hc}^*, u_c^*, n_k^*)$  be an incumbent obtained via an LP relaxation at a given node of the branch-and-cut solving the master program (\ref{mpc-obj}) subject to (\ref{mpc-eq2})-(\ref{mpc-eq8}). If the test of Theorem \ref{theorem-benders-worker} fails, the following sparse cut is locally valid, i.e. is valid in the subtree of the branch-and-bound originating from the current node providing the incumbent $(x_i^*, x_{hc}^*, u_c^*, n_k^*)$:

\begin{equation}
\sum_{c | u_c^* = 1} (1-u_c) \geq 1,
\end{equation}

meaning that at least one of the currently accepted MP bids should be excluded in any solution found deeper in the subtree.
\end{theorem}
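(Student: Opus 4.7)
The plan is to mirror the argument given for Theorem \ref{theorem-strenghtened-cuts-globally} almost verbatim, replacing the \emph{global} optimality of the incumbent $(x^*, x_{hc}^*, u^*, n^*)$ over the master feasible set by its \emph{local} counterpart inside the branch-and-cut subtree rooted at the current node. The only place where global optimality was used in that earlier proof was to bound the welfare of any competing feasible $(x, x_{hc}, u, n)$ from above by the welfare of the incumbent; inside the subtree this bound still holds because the LP relaxation value at the current node upper-bounds the welfare of every integer-feasible descendant, and this LP value coincides with the welfare of $(x^*, x_{hc}^*, u^*, n^*)$ by assumption.

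With this local bound in hand, I would then follow the three steps of the proof of Theorem \ref{theorem-strenghtened-cuts-globally}. First, invoke Theorem \ref{theorem-benders-worker} together with Observation \ref{observation-1} to extract an optimal worker point $(x^\#, x_{hc}^\#, u^\#, n^\#)$ with $u_c^\# = 0$ whenever $u_c^* = 0$, that violates the classical Benders cut (\ref{benders-cut}); after the vanishing big-M term is stripped out this yields the strict inequality exactly as in (\ref{locally-derivation-3}) between the welfare of the incumbent and that of the worker point. Second, take any candidate $(x, x_{hc}, u, n)\in G$ lying in the subtree and assume, for contradiction, that it violates the strengthened cut, i.e.\ that $u_c = 1$ for every $c$ with $u_c^* = 1$. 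Combined with Observation \ref{observation-1}, this makes the penalty $M_c(1-u_c)u_c^\#$ vanish coordinate-wise, so that the classical Benders cut (\ref{benders-cut}) --- which is globally valid for $G$ and hence must be satisfied by this candidate --- collapses to the reverse inequality: the welfare of the worker point is at most the welfare of the candidate. Third, chaining this with the local LP upper bound stated in the first paragraph contradicts the strict inequality of the first step, and completes the proof.

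The step I expect to be least trivial is simply articulating cleanly the ``local LP upper bound'' argument. The subtlety is that cuts of the form (\ref{cut-strengthened-globally}) or (\ref{benders-cut}) may have been added dynamically higher in the tree, and one must check that this does not undermine the bound; but any such previously added cut is itself valid for $G$, so it only tightens the LP relaxation at the current node and never removes any feasible descendant whose welfare should be dominated. Once this observation is recorded, the remainder of the argument is essentially a transcription of the proof of Theorem \ref{theorem-strenghtened-cuts-globally} with ``global'' everywhere replaced by ``in the subtree rooted at the current node''.
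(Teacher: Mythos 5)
Your proposal is correct and follows essentially the same route as the paper, which likewise reduces the claim to the argument of Theorem \ref{theorem-strenghtened-cuts-globally} with global optimality replaced by local optimality of the LP-relaxation incumbent over the subtree. Your explicit remark that previously added cuts only tighten the node LP without excluding any feasible descendant in the subtree is a detail the paper leaves implicit, but it is the same argument, not a different one.
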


\begin{proof}
This is also a consequence of Theorem \ref{theorem-benders-cuts} and the proof is a slight variant of the proof of Theorem \ref{theorem-strenghtened-cuts-globally}. Since in the present case $(x_i^*, x_{hc}^*, u_c^*, n_k^*)$ is just an incumbent and no longer globally optimal for the master program, to reproduce the argument providing (\ref{locally-derivation}), we use the \emph{local} optimality of the incumbent $(x_i^*, x_{hc}^*, u_c^*, n_k^*)$ obtained via an LP relaxation, and the fact that the other solutions considered $(x_i, x_{hc}, u_c, n_k)$ lie in the subtree originating from the current node providing $(x_i^*, x_{hc}^*, u_c^*, n_k^*)$.
\end{proof}

\section{Numerical Experiments}\label{sec-numerical}
Implementation of the models and algorithms proposed above have been made in Julia using JuMP\cite{LubinDunningIJOC}, an open source package providing an algebraic modelling language embedded within Julia, CPLEX 12.6.2 as the underlying MIP solver, and ran on a laptop with an i5 5300U CPU with 4 cores @2.3 Ghz and 8GB of RAM. The source code and sample data sets used to compute the tables presented below are available online, see \cite{revisiting_mp_conditions} . Let us note that thanks to Julia/JuMP, it is easy to switch from one solver to another, provided that all the required features are available.  
\emph{Raw implementations} of the primal-dual formulation MarketClearing-MPC, and the classic and modern Benders decompositions all fit within 250 lines of code including input-output data management (see the file 'dam.jl' provided online), while some solution checking tools provided in an auxiliary file span about 180 lines of code.

Our main purpose here is to compare the new approach proposed to the market rules used until now by the power exchange OMIE (part of PCR). We thus have considered realistic datasets corresponding to the case of Spain and Portugal. Notable differences compared to real data for example available at \cite{omie_public_data} is that the marginal costs of the first steps of each bid curve associated to a given MIC order have been replaced by the variable cost of that MIC order whenever they were below the variable cost, and as a consequence, a minimum acceptance ratio of 0.6 has been set for the first step of each of these bid curves. The rationale for such modifications is the following: marginal costs for the first steps of the bid curves are sometimes very low (even almost null) certainly to ensure a reasonable level of acceptance of the corresponding offered quantities for operational reasons, and increasing them would decrease too much the accepted quantities at some hours, which is counterbalanced by setting an appropriate acceptance ratio at each hour in case the MP order is part of the market outcome solution. Let us recall that an MP order can only be accepted if the losses incurred at some hours (due to the minimum acceptance ratios forcing paradoxical acceptances  which are 'measured' by the dual variables $s_{hc}^{min}$) are sufficiently compensated by the profits made at some other hours of the day. All the costs  have then been uniformly scaled to obtain interesting instances where e.g. the MP conditions are not all verified if only the primal program (\ref{primal-obj})-(\ref{primal-eq8}) is solved. As network aspects are not central here, a simple two nodes network corresponding to coupling Spain and Portugal is considered.

\begin{table}[ht]
\begin{center}
\begin{tabular}{c|c|c|c|c|c|c|c}
Inst.	&Welfare	&Abs. gap	&Solver's cuts	&Nodes	&Runtime	&\# MP Bids	&\# Curve Steps\\
\hline
1	&151218658.27	&0.00	&24	&388	&72.63	&92	&14494\\
2	&115365156.34	&0.00	&15	&181	&38.08	&90	&14309\\
3	&112999837.94	&1644425.79	&21	&4085	&600.17	&91	&14329\\
4	&107060355.83	&0.00	&16	&0	&7.63	&89	&14370\\
5	&100118316.52	&0.00	&15	&347	&96.06	&89	&15091\\
6	&97572068.18	&0.00	&18	&1116	&143.65	&86	&14677\\
7	&87937471.32	&1091700.74	&27	&4958	&600.11	&87	&14979\\
8	&89866979.23	&0.00	&87	&1707	&296.41	&87	&16044\\
9	&86060320.81	&0.00	&97	&361	&57.27	&81	&15177\\
10	&90800596.61	&3755055.95	&59	&2430	&600.02	&90	&16475\\
\end{tabular} 
\end{center}
\caption{Instances with 'MIC Orders' as in OMIE-PCR}
\label{table4}
\end{table}

\begin{table}[ht]
\begin{center}
\begin{tabular}{c|c|c|c|c|c|c|c}
Inst.	&Welfare	&Abs. gap	&Solver's cuts	&Nodes	&Runtime	&\# MP Bids	&\# Curve Steps\\
\hline
1	&151487156.16	&0.00	&11	&9	&17.36	&92	&14494\\
2	&115475592.36	&0.00	&11	&0	&16.38	&90	&14309\\
3	&114220400.20	&0.00	&24	&0	&17.23	&91	&14329\\
4	&107219935.90	&0.00	&35	&7	&17.48	&89	&14370\\
5	&100743738.16	&0.00	&14	&0	&14.74	&89	&15091\\
6	&98359291.45	&0.00	&10	&0	&15.67	&86	&14677\\
7	&89251699.16	&0.00	&84	&3	&22.92	&87	&14979\\
8	&90797407.15	&0.00	&27	&0	&21.58	&87	&16044\\
9	&86403721.22	&0.00	&35	&7	&25.04	&81	&15177\\
10	&94034444.59	&0.00	&20	&0	&19.58	&90	&16475\\
\end{tabular} 
\end{center}
\caption{Instances with MP bids - MarketClearing-MPC formulation}
\label{table5}
\end{table}

As both market models, though different, pursue the same goal of modelling start-up costs and marginal costs recovery conditions while representing in some ways indivisibilities of production (with minimum acceptance ratios or using very low marginal costs for the first amounts of power produced in some original datasets), Tables \ref{table4} \& \ref{table5} propose a comparison from a computational point of view, which shows the benefits of the new approach. A key issue wit the current practice is the absence of the fixed costs in the objective function and the occurrence of an 'ad-hoc' variable cost in the minimum income conditions which is not related to the marginal costs used in the objective function. The objective function in the continuous relaxations somehow 'goes in a direction' which may not be the most appropriate with respect to the enforcement of the minimum income conditions. On the other side, the new approach seems more natural as it enforces minimum profit conditions by requiring that the 'shadow costs of acceptance' $du^a$ must all be null, see Corollary \ref{corollary:mpconditions}.

Table \ref{table6} is to be compared with Table \ref{table5} e.g. in terms of runtimes and visited nodes, as it solves exactly the same market model. Heuristics of the solver have been here deactivated as primal feasible solutions found need to be obtained as optimal solutions of the LP relaxation at the given node for the local cuts of Theorem \ref{theorem-strenghtened-cuts} to be valid (cf. its statement above). As it can be seen, the Benders decomposition is faster by an order of magnitude for the instances at hand.

\begin{table}[ht]
\begin{center}
\begin{tabular}{c|c|c|c|c|c|c|c}
Inst.	&Welfare	&Lazy cuts	&Solver's cuts	&Nodes	&Runtime	&\# MP Bids	&\# Curve Steps\\
\hline
1	&151487156.16	&2	&0	&5	&2.66	&92	&14494\\
2	&115475592.36	&1	&18	&5	&1.38	&90	&14309\\
3	&114220400.20	&1	&28	&3	&1.81	&91	&14329\\
4	&107219935.90	&2	&14	&11	&1.78	&89	&14370\\
5	&100743738.16	&1	&12	&3	&1.36	&89	&15091\\
6	&98359291.45	&1	&3	&3	&1.36	&86	&14677\\
7	&89251699.16	&1	&29	&8	&1.54	&87	&14979\\
8	&90797407.15	&1	&11	&3	&1.66	&87	&16044\\
9	&86403721.22	&2	&1	&13	&2.24	&81	&15177\\
10	&94034444.59	&1	&40	&4	&1.54	&90	&16475\\
\end{tabular} 
\end{center}
\caption{Instances with MP bids - Benders decomposition of Theorem \ref{theorem-strenghtened-cuts}}
\label{table6}
\end{table}

\section{Conclusions}

A new approach to minimum profit or maximum payment conditions has been proposed in the form of a bidding product called 'MP bid', which turns out to generalize both block orders with a minimum acceptance ratio used in France, Germany or Belgium,  and, mutatis mutandis, complex orders with a minimum income condition used in Spain and Portugal. The corresponding market clearing conditions such as minimum profit or maximum payment conditions can be expressed with a 'primal-dual' MILP model involving both primal decision variables such as unit commitment or power output variables, and dual decision variables such as market prices or economic surpluses of market participants, while avoiding the introduction of \emph{any} auxiliary variables, whether continuous or binary. Moreover, it can be used to derive a Benders decomposition with strengthened cuts of a kind which is known to be efficient to handle block bids. These MP bids hence seem an appropriate tool to foster market design and bidding products convergence among the different regions which form the coupled European day-ahead electricity markets of the Pan-European PCR project. Also, compared to the MIC orders currently in use at OMIE-PCR, they have the following additional advantages. Firstly, they lead to optimisation models that can be solved much more quickly. Secondly, the proposed instruments seem to be more aligned with the operating constraints and cost structure of the power plants that they are supposed to represent in the market. Finally, they are more natural (from an economic point of view) and simpler (from a modelling point of view), leading to a market model easier to understand for participants and monitor for regulators. All the models and algorithms have been implemented in Julia/JuMP and are available online together with sample datasets to foster research and exchange on the topic. The models and algorithms can also be used to clear instances involving block bids only (small extensions could also be added to handle linked and exclusive block orders as described in \cite{euphemia} if desired). European day-ahead electricity markets will certainly be subject to a major evolution in the coming years, as many challenges are still to be faced, which calls for further research within the academic and industrial communities. The present contribution is a proposal made in that frame.

\textbf{Acknowledgements:}
We greatly thank OMIE and N-Side for providing us with data used to generate realistic instances. This text presents research results of the P7/36 PAI project COMEX, part of the Interuniversity Attraction Poles (IAP) Programme of the Belgian Science Policy Office.This work was also supported by EC-FP7-PEOPLE MINO Marie-Curie Initial Training Network (grant number 316647) and by EC-FP7 COST Action TD1207. The scientific responsibility is assumed by the authors.

%%%%%%%%%%%%%%%%%%%%%%%%%%%%%%%%%%
%%%%%%%%%%%%%%%%%%%%%%%%%%%%%%%%
%%%%%%%%%%%%%%%%%%%%%%%%%%%%%%%%%%%%%%

%\paragraph{Paragraph headings} Use paragraph headings as needed.
%
%% For one-column wide figures use
%\begin{figure}
%% Use the relevant command to insert your figure file.
%% For example, with the graphicx package use
%%  \includegraphics{example.eps}
%% figure caption is below the figure
%\caption{Please write your figure caption here}
%\label{fig:1}       % Give a unique label
%\end{figure}
%%
%% For two-column wide figures use
%\begin{figure*}
%% Use the relevant command to insert your figure file.
%% For example, with the graphicx package use
%%  \includegraphics[width=0.75\textwidth]{example.eps}
%% figure caption is below the figure
%\caption{Please write your figure caption here}
%\label{fig:2}       % Give a unique label
%\end{figure*}
%%
%% For tables use
%\begin{table}
%% table caption is above the table
%\caption{Please write your table caption here}
%\label{tab:1}       % Give a unique label
%% For LaTeX tables use
%\begin{tabular}{lll}
%\hline\noalign{\smallskip}
%first & second & third  \\
%\noalign{\smallskip}\hline\noalign{\smallskip}
%number & number & number \\
%number & number & number \\
%\noalign{\smallskip}\hline
%\end{tabular}
%\end{table}
%

\appendix

\section{Omitted proofs in main text}

\subsection{Proof of Theorem \ref{theorem-benders-worker}}
Reminder of the Farkas Lemma \cite{schrijver}, which is used in the proof afterward: 

$\exists x : Ax<=b, x\geq 0$ if and only if $\forall y: y\geq 0, yA\geq 0 \Rightarrow yb \geq0$

\begin{proof}

Applying the Farkas lemma, given an incumbent $(x_i^*, x_{hc}^*, u_c^*, n_k^*)$, a solution $(s_i, s_{hc}^{max}, s_{hc}^{min}, s_c, \pi_{l,t}, v_m)$ to the remaining linear conditions (\ref{mpc-eq1}), (\ref{mpc-eq9})-(\ref{mpc-eq13}) exist if and only if:

\begin{multline}\label{farkas-condition-1}
\sum_{i} (P^{i}Q^{}_{i})x_i + \sum_{c, h\in H_c} (P^{hc}Q^{}_{hc})x_{hc} - \sum_c F_c u_c - M_c (1-u_{c}^*)u_c  \\ \leq \sigma (\sum_{i} (P^{i}Q^{}_{i})x_i^* + \sum_{c, h\in H_c} (P^{hc}Q^{}_{hc})x_{hc}^* - \sum_c F_c u_c^* )
\end{multline}

$\forall (\sigma, x_i, x_{hc}, u_c, n_k)$ such that:

\begin{align}
&x_i \leq \sigma & \forall i \in I \  [s_i] \\
&x_{hc} \leq u_{c} & \forall h \in H_c, c\in C \ [s_{hc}^{max}]  \\
&x_{hc} \geq r_{hc} u_{c} & \forall h \in H_c, c\in C \ [s_{hc}^{min}]  \\
&u_{c} \leq \sigma &\forall c \in C [s_{c}]  \\
&\sum_{i \in I_{lt}}Q^{}_{i}x_i  + \sum_{hc\in HC_{lt}} Q^{}_{hc}x_{hc}  \nonumber\\ & \hspace{4cm} = \sum_{k} e^k_{l,t} n_k, \ &  \forall (l,t) \ \ [\pi_{l,t}] \label{farkas-condition-6} \\
&\sum_{k} a_{m,k} n_k  \leq w_{m}\ & \forall m \in N \ \  [v_{m}] \label{farkas-condition-7} \\
& x_i, x_{hc}, u_c, \sigma \geq 0 \label{farkas-condition-end}
\end{align}

Since the condition described by (\ref{farkas-condition-1})-(\ref{farkas-condition-end}) is trivially satisfied when $\sigma=0$ (technically assuming that network conditions (\ref{farkas-condition-6})-(\ref{farkas-condition-7}) could be satisfied when $x_i=x_{hc}=0$), we can normalize, i.e. set $\sigma:=1$ and the condition becomes 

\begin{multline}\label{test-ineq}
\max_{\forall (x_i, x_{hc}, u_c, n_k) \in P}\sum_{i} (P^{i}Q^{}_{i})x_i + \sum_{c, h\in H_c} (P^{hc}Q^{}_{hc})x_{hc} - \sum_c F_c u_c - M_c (1-u_{c}^*)u_c  \\ \leq \sum_{i} (P^{i}Q^{}_{i})x_i^* + \sum_{c, h\in H_c} (P^{hc}Q^{}_{hc})x_{hc}^* - \sum_c F_c u_c^*,
\end{multline}

where $P$ is the polyhedron defined by the linear conditions (\ref{mpc-eq2})-(\ref{mpc-eq7}), that is the linear relaxation f (\ref{mpc-eq2})-(\ref{mpc-eq8}). This provides the first result (\ref{test-program-1}).

Now, observe that an optimal solution of the left-hand side of (\ref{test-program-1}) or (\ref{test-ineq}) is always such that $u_c^{\#} = 0$ if $u_c^{*} = 0$, because of the penalty coefficients $M_c$, or alternatively because $u_c^{\#} = 0$ corresponds to the optimal dual variable of (\ref{mpc-eq11}) which is not tight when $u_c^{*} = 0$. This proves (\ref{test-program-2}). \end{proof}

\bibliography{template}

\begin{thebibliography}{21}
\providecommand{\natexlab}[1]{#1}
\providecommand{\url}[1]{\texttt{#1}}
\expandafter\ifx\csname urlstyle\endcsname\relax
  \providecommand{\doi}[1]{doi: #1}\else
  \providecommand{\doi}{doi: \begingroup \urlstyle{rm}\Url}\fi

\bibitem[Benders(1962)]{Benders}
J.F. Benders.
\newblock Partitioning procedures for solving mixed-variable programming
  problems.
\newblock \emph{Numer. Math.}, 4:\penalty0 238--252, 1962.

\bibitem[Enke(1951)]{enke}
Stephen Enke.
\newblock Equilibrium among spatially separated markets: Solution by electric
  analogue.
\newblock \emph{Econometrica}, 19\penalty0 (1):\penalty0 40--47, 1951.
\newblock ISSN 00129682.
\newblock URL \url{http://www.jstor.org/stable/1907907}.

\bibitem[Euphemia(2016)]{euphemia}
Euphemia.
\newblock Euphemia public description v1.3.
\newblock 2016.
\newblock URL
  \url{https://www.epexspot.com/document/34460/Euphemia%3A%20Public%20documentation%20-%20January%202016}.

\bibitem[Europex(2012)]{europex}
Europex.
\newblock Online publications, 2012.
\newblock URL \url{http://www.europex.org}.

\bibitem[Fern{\'a}ndez-Blanco et~al.(2015)Fern{\'a}ndez-Blanco, Arroyo, and
  Alguacil]{Fernandez-Blanco2015}
Ricardo Fern{\'a}ndez-Blanco, Jos{\'e}~M. Arroyo, and Natalia Alguacil.
\newblock Bilevel programming for price-based electricity auctions: a
  revenue-constrained case.
\newblock \emph{EURO Journal on Computational Optimization}, 3\penalty0
  (3):\penalty0 163--195, 2015.
\newblock ISSN 2192-4414.
\newblock \doi{10.1007/s13675-015-0037-8}.
\newblock URL \url{http://dx.doi.org/10.1007/s13675-015-0037-8}.

\bibitem[Gabriel et~al.(2013)Gabriel, Conejo, Ruiz, and Siddiqui]{gabriel2013}
Steven~A. Gabriel, Antonio~J. Conejo, Carlos Ruiz, and Sauleh Siddiqui.
\newblock Solving discretely constrained, mixed linear complementarity problems
  with applications in energy.
\newblock \emph{Computers and Operations Research}, 40\penalty0 (5):\penalty0
  1339 -- 1350, 2013.
\newblock ISSN 0305-0548.
\newblock \doi{http://dx.doi.org/10.1016/j.cor.2012.10.017}.
\newblock URL
  \url{http://www.sciencedirect.com/science/article/pii/S0305054812002298}.

\bibitem[García-Bertrand et~al.(2005)García-Bertrand, Conejo, and
  Gabriel]{GarciaBertrand2}
Raquel García-Bertrand, AntonioJ. Conejo, and StevenA. Gabriel.
\newblock Multi-period near-equilibrium in a pool-based electricity market
  including on/off decisions.
\newblock \emph{Networks and Spatial Economics}, 5\penalty0 (4):\penalty0
  371--393, 2005.
\newblock ISSN 1566-113X.
\newblock \doi{10.1007/s11067-005-6209-y}.
\newblock URL \url{http://dx.doi.org/10.1007/s11067-005-6209-y}.

\bibitem[Garcia-Bertrand et~al.(2006)Garcia-Bertrand, Conejo, and
  Gabriel]{GarciaBertrand}
Raquel Garcia-Bertrand, Antonio~J. Conejo, and Steven Gabriel.
\newblock Electricity market near-equilibrium under locational marginal pricing
  and minimum profit conditions.
\newblock \emph{European Journal of Operational Research}, 174\penalty0
  (1):\penalty0 457 -- 479, 2006.
\newblock ISSN 0377-2217.
\newblock \doi{http://dx.doi.org/10.1016/j.ejor.2005.03.037}.
\newblock URL
  \url{http://www.sciencedirect.com/science/article/pii/S0377221705003814}.

\bibitem[Lubin and Dunning(2015)]{LubinDunningIJOC}
Miles Lubin and Iain Dunning.
\newblock Computing in operations research using julia.
\newblock \emph{INFORMS Journal on Computing}, 27\penalty0 (2):\penalty0
  238--248, 2015.
\newblock \doi{10.1287/ijoc.2014.0623}.
\newblock URL \url{http://dx.doi.org/10.1287/ijoc.2014.0623}.

\bibitem[Madani and Van~Vyve(2015{\natexlab{a}})]{Madani2014}
Mehdi Madani and Mathieu Van~Vyve.
\newblock Computationally efficient mip formulation and algorithms for european
  day-ahead electricity market auctions.
\newblock \emph{European Journal of Operational Research}, 242\penalty0
  (2):\penalty0 580 -- 593, 2015{\natexlab{a}}.
\newblock ISSN 0377-2217.
\newblock \doi{http://dx.doi.org/10.1016/j.ejor.2014.09.060}.
\newblock URL
  \url{http://www.sciencedirect.com/science/article/pii/S0377221714007991}.

\bibitem[Madani and Van~Vyve(2015{\natexlab{b}})]{madani2015mip}
Mehdi Madani and Mathieu Van~Vyve.
\newblock A mip framework for non-convex uniform price day-ahead electricity
  auctions.
\newblock \emph{EURO Journal on Computational Optimization}, pages 1--22,
  2015{\natexlab{b}}.
\newblock ISSN 2192-4406.
\newblock \doi{10.1007/s13675-015-0047-6}.
\newblock URL \url{http://dx.doi.org/10.1007/s13675-015-0047-6}.

\bibitem[Madani and Van~Vyve(2016)]{revisiting_mp_conditions}
Mehdi Madani and Mathieu Van~Vyve.
\newblock Source code and sample datasets for 'revisiting minimum profit
  conditions in uniform price day-ahead electricity auctions', 2016.
\newblock URL \url{http://github.com/madanim/revisiting_mp_conditions/}.

\bibitem[Martin et~al.(0)Martin, Müller, and Pokutta]{martin}
Alexander Martin, Johannes~C. Müller, and Sebastian Pokutta.
\newblock Strict linear prices in non-convex european day-ahead electricity
  markets.
\newblock \emph{Optimization Methods and Software}, 0\penalty0 (0):\penalty0
  1--33, 0.
\newblock \doi{10.1080/10556788.2013.823544}.
\newblock URL
  \url{http://www.tandfonline.com/doi/abs/10.1080/10556788.2013.823544}.

\bibitem[Mas-Colell et~al.(1995)Mas-Colell, Whinston, and Green]{mascolell}
Andreu Mas-Colell, Michael~D. Whinston, and Jerry~R. Green.
\newblock \emph{Microeconomic Theory}.
\newblock Oxford University Press, 1995.

\bibitem[M{\"u}ller(2014)]{muller2014auctions}
Johannes M{\"u}ller.
\newblock \emph{Auctions in Exchange Trading Systems: Modeling Techniques and
  Algorithms}.
\newblock PhD thesis, University of Erlangen-Nürnberg, 2014.
\newblock URL
  \url{https://opus4.kobv.de/opus4-fau/files/5339/mueller2014+-+Auctions+in+Exchange+Trading+Systems.pdf}.

\bibitem[M{\"u}ller(2013)]{muller2013competitive}
Johannes~C M{\"u}ller.
\newblock Competitive equilibrium relaxations in general auctions.
\newblock \emph{arXiv preprint arXiv:1307.2852}, 2013.
\newblock URL \url{https://arxiv.org/pdf/1307.2852v2}.

\bibitem[OMIE(2016)]{omie_public_data}
OMIE.
\newblock Publicly available day-ahead and intraday market data, 2016.
\newblock URL \url{http://www.omie.es/aplicaciones/datosftp/datosftp.jsp}.

\bibitem[Ruiz et~al.(2012)Ruiz, Conejo, and Gabriel]{ruiz2012}
C.~Ruiz, A.J. Conejo, and S.A. Gabriel.
\newblock Pricing non-convexities in an electricity pool.
\newblock \emph{Power Systems, IEEE Transactions on}, 27\penalty0 (3):\penalty0
  1334--1342, 2012.
\newblock ISSN 0885-8950.
\newblock \doi{10.1109/TPWRS.2012.2184562}.

\bibitem[Samuelson(1952)]{samuelson1952}
Paul~A. Samuelson.
\newblock Spatial price equilibrium and linear programming.
\newblock \emph{The American Economic Review}, 42\penalty0 (3):\penalty0 pp.
  283--303, 1952.
\newblock URL \url{http://www.jstor.org/stable/1810381}.

\bibitem[Schrijver(1986)]{schrijver}
Alexander Schrijver.
\newblock \emph{Theory of Linear and Integer Programming}.
\newblock John Wiley \& Sons, Chichester, 1986.

\bibitem[Zak et~al.(2012)Zak, Ammari, and Cheung]{zak}
E.J. Zak, S.~Ammari, and K.W. Cheung.
\newblock Modeling price-based decisions in advanced electricity markets.
\newblock In \emph{European Energy Market (EEM), 2012 9th International
  Conference on the}, pages 1--6, May 2012.
\newblock \doi{10.1109/EEM.2012.6254813}.

\end{thebibliography}

\bibliographystyle{plainnat}

\end{document}